\documentclass[11pt]{amsart}
\usepackage{amssymb}
\usepackage[margin=1.07in]{geometry}

\newtheorem{thm}{Theorem}[section]
\newtheorem{lem}[thm]{Lemma}
\newtheorem{prop}[thm]{Proposition}
\newtheorem{cor}[thm]{Corollary}
\theoremstyle{definition}

\theoremstyle{remark}
\newtheorem{remark}[thm]{Remark}

\numberwithin{equation}{section}

\begin{document}

\title[Higher Schwarzians of elliptic double covers]{Higher Schwarzians of Elliptic Double Covers and Eisenstein--Kronecker Functions}
\author{Hicham Saber}
\author{Abdellah Sebbar}
\address{Department of Mathematics, University of Ha'il, Saudi Arabia}
\address{Department of Mathematics and Statistics, University of Ottawa, Ottawa, Ontario K1N 6N5, Canada}
\email{hi.saber@uoh.edu.sa}
\email{asebbar@uottawa.ca}

\subjclass[2020]{11F11, 11G05, 11G40, 30C20, 33E05}
\keywords{higher Schwarzian derivative, Aharonov invariants, elliptic curves, Weierstrass function, Eisenstein--Kronecker series, torsion points, Eisenstein series, exact differentials, Hasse invariant, supersingular elliptic curves}

\begin{abstract}
We determine the Aharonov invariants of order at least two of every
elliptic double cover by evaluating its projective kernel. In orders
at least three, the formula separates the invariant into a constant
Eisenstein term and an Eisenstein--Kronecker function evaluated under
multiplication by two.
The formula determines the ramification principal parts, torsion
specializations, and isogeny traces, including the correction from
two-torsion in an isogeny kernel. It also realizes the same intrinsic
de Rham tensor through every degree-two projection: in higher orders
this is the image of an Eisenstein section, while order two gives the
classical Weierstrass complement to the Hodge line. A separate
differential calculation shows that the Bernoulli-normalized invariant
of order $p-1$ reduces to the quotient of the $p$th and first iterates
of the defining derivation. For an elliptic invariant derivation this
quotient is the Hasse invariant, independently of the rational function
on its separable locus. The universal double cover provides a global
specialization, integral away from two-torsion, whose reduction extends
regularly across that locus. Classical Eisenstein zero theorems and
the supersingular divisor congruence then describe the exactness
loci and their reduction.
\end{abstract}

\maketitle

\section{Introduction}

Aharonov's higher Schwarzians encode the local projective geometry of a
meromorphic function. For an elliptic double cover, the covering
involution makes this entire sequence explicitly accessible. We
calculate its projective kernel and show that the higher invariants
are classical elliptic functions evaluated under multiplication by
two, with a constant Eisenstein contribution. A second result concerns
reduction in characteristic $p$: after Bernoulli normalization, the
order-$p-1$ invariant detects the restricted power of the defining
derivation and, on an elliptic curve, the Hasse invariant.

We use the normalization
\[
\frac{f'(z)}{f(z+t)-f(z)}
=\frac1t-\sum_{n\geq1}S_n[f](z)t^{n-1}.
\]
For $n\geq2$, these invariants are unchanged by linear fractional
postcomposition, and $6S_2[f]$ is the ordinary Schwarzian derivative
\cite{aharonov}. Related higher Schwarzian operators were studied by
Tamanoi \cite{tamanoi}. The projective kernel and its Aharonov diagonal
expansion are already part of the general formalism in
\cite[Eq.~(2.3) and Proposition~4.1]{saber-sebbar}; that paper also
relates the kernel to Faber--Grunsky coefficients and Bernoulli-normalized
cusp moments. Here the essential analytic step is its elliptic
evaluation:
\[
\mathcal P_{\wp}(w,z)
=\wp(w-z)-\wp(w+z)-\frac1{(w-z)^2}.
\]
For a lattice $\Omega$, Theorem~\ref{thm:closed-formula} gives
\begin{equation}\label{eq:intro-closed}
S_2[\wp](z)=-\wp(2z),\qquad
S_n[\wp](z)=G_n(\Omega)-\frac{\wp^{(n-2)}(2z)}{(n-1)!},
\quad n\geq3,
\end{equation}
where $G_n(\Omega)=\sum_{\lambda\in\Omega\setminus\{0\}}\lambda^{-n}$.
Every degree-two map has the form $M(\wp(z-Q))$; the possible choices of
$Q$ differ by two-torsion and give the same higher invariants. Thus
\eqref{eq:intro-closed} determines every elliptic double cover, including
the exact principal part at each ramification point.

The de Rham interpretation records precisely which part of this
formula survives passage to cohomology. For a nonzero invariant
differential $\omega$, Theorem~\ref{thm:intrinsic-class} shows that
\[
\Xi_n(E)=[S_n^\omega[f]\,\omega]\otimes\omega^{\otimes(n-1)}
\]
is independent of the degree-two map $f$ and of the frame $\omega$.
For $n\geq3$, it is the image of
$G_n(\Omega_\omega)\omega^{\otimes n}$ under the Hodge inclusion.
In order two it is
\[
\Xi_2(E)=-[x\omega]\otimes\omega,
\]
where $x$ is the normalized Weierstrass coordinate determined by
$\omega$. This is the classical complementary tensor in Weierstrass
de Rham cohomology, as described in
\cite[Appendix~A1.2, Eqs.~(A1.2.4)--(A1.2.6)]{katz}.
The higher-Schwarzian formula realizes this tensor through every
degree-two projection and gives explicit exact differentials relating
the representatives. Consequently order two is never exact, odd
orders at least three are always exact, and even orders at least four
are exact precisely on the corresponding Eisenstein zero locus.

The arithmetic result begins with an arbitrary derivation. Let
$\partial$ be a derivation of a $p$-torsion-free $\mathbb Z_{(p)}$-algebra,
where $p\geq5$, and suppose that $\partial f$ is a unit. Define the
Aharonov invariants using the Taylor series of the iterates of
$\partial$. Proposition~\ref{prop:Bernoulli-reduction} proves integrality
of the normalized expression and the congruence
\begin{equation}\label{eq:intro-general-reduction}
-\frac{(p-1)!}{B_{p-1}}S_{p-1}^\partial[f]
\equiv\frac{\bar\partial^{\,p}\bar f}{\bar\partial\bar f}\pmod p.
\end{equation}
Only the $p$th derivative introduces a factorial denominator divisible
by $p$ at this order. On a smooth curve over a perfect field of
characteristic $p$, the quotient on the right is independent of $f$
wherever its denominator is nonzero, because the space of rational
derivations has dimension one. For an elliptic
invariant derivation, the classical identity
$\bar\partial^{\,p}=A(E,\bar\omega)\bar\partial$
\cite[\S2.0]{katz} identifies the quotient with the Hasse invariant.
Theorem~\ref{thm:general-Hasse} therefore applies to arbitrary rational
functions on their separable locus, not only to double covers.

The universal double cover is a global corollary of this differential
result. On the framed Weierstrass family its complement of two-torsion
is the principal open set where $y=\partial x$ is invertible.
Corollary~\ref{cor:Hasse-reduction} gives a regular section of the
pullback of the $(p-1)$st Hodge power on this open set; its reduction
extends to the whole universal elliptic curve as the pullback of the
Hasse invariant. The explicit elliptic formula also separates this
section into $E_{p-1}$ and a correction divisible by $p$. Regularity
across two-torsion is asserted only after reduction.

The remaining results express the consequences of the elliptic
identification within classical theory. The nonconstant terms are
Eisenstein--Kronecker functions; their distribution relations yield
isogeny traces, and their torsion values span the Eisenstein spaces of
principal congruence groups. We specify the analytic normalization of
the Kronecker expression used for the generating series and distinguish
it from a meromorphic Poincar\'e-bundle section. The results of
Rankin--Swinnerton-Dyer, Nozaki, and Kohnen describe the resulting
exactness parameters \cite{rankin-swinnerton-dyer,nozaki,kohnen-trans}.
The supersingular polynomial congruence is the classical Eisenstein
case of Kaneko--Zagier \cite[Theorem~1]{kz-ss}.

Sections~\ref{sec:preliminaries}--\ref{sec:double-covers} establish the
elliptic formula and its geometric interpretation.
Sections~\ref{sec:torsion} and \ref{sec:isogenies} treat torsion and
isogenies. Section~\ref{sec:derham} identifies the de Rham classes,
and Section~\ref{sec:hasse-reduction} proves the differential reduction
formula and its elliptic specializations. The classical divisor
applications are collected in Section~\ref{sec:applications}.

\section{Higher Schwarzians and elliptic preliminaries}\label{sec:preliminaries}

Let $D$ be a domain in the Riemann sphere and let $f$ be meromorphic on $D$. At a point where $f$ is holomorphic and $f'(z)\neq0$, define
\begin{equation}\label{eq:ahar-expansion}
\frac{f'(z)}{f(z+t)-f(z)}
=
\frac1t-\sum_{n=1}^{\infty}S_n[f](z)t^{n-1}.
\end{equation}
The first two coefficients are
\begin{equation}\label{eq:first-two}
S_1[f]=\frac{f''}{2f'},
\quad
6S_2[f]=\{f,z\}.
\end{equation}
For $n\geq2$, $S_n[f]$ is invariant under postcomposition of $f$ by an element of $\operatorname{PGL}_2(\mathbb C)$ \cite{aharonov}. Differentiating \eqref{eq:ahar-expansion} gives the universal recurrence
\begin{equation}\label{eq:ahar-recurrence}
(n+1)S_n[f]
=
S_{n-1}'[f]
+\sum_{r=2}^{n-2}S_r[f]S_{n-r}[f],
\quad n\geq3,
\end{equation}
where the sum is empty for $n=3$.

When $E$ is a complex elliptic curve, $f:E\to\mathbb P^1$ is meromorphic, and $\omega$ is a nonzero invariant differential, we write $S_n^\omega[f]$ for the invariant computed in a uniformizing coordinate $z$ with $dz=\omega$. If $\Omega_\omega$ is the corresponding period lattice and $\mathcal H_E=H^0(E,\Omega_E^1)$, then a change of frame gives
\begin{equation}\label{eq:scaling-Sn}
S_n^{c\omega}[f]=c^{-n}S_n^\omega[f],\qquad c\in\mathbb C^\times.
\end{equation}
This notation will be used when the higher Schwarzians are viewed intrinsically on $E$.

The regularized two-point projective kernel, with the normalization of \cite[Eq.~(2.3)]{saber-sebbar}, is
\[
\mathcal P_f(w,z)
=
\frac{f'(w)f'(z)}{(f(w)-f(z))^2}-\frac1{(w-z)^2}.
\]
Differentiating \eqref{eq:ahar-expansion} with respect to $t$ gives its diagonal expansion:
\[
\mathcal P_f(z+t,z)
=
\sum_{n=2}^{\infty}(n-1)S_n[f](z)t^{n-2}.
\]
At a point of local univalence, the regularized diagonal value is therefore
$\mathcal P_f(z,z)=S_2[f](z)=\{f,z\}/6$. Thus the diagonal coefficients recover all the higher invariants of order at least two, with the normalization fixed by \eqref{eq:first-two}.

Let
\[
\Omega=\omega_1\mathbb Z+\omega_2\mathbb Z,
\quad
\frac{\omega_2}{\omega_1}\in\mathbb H,
\]
where $\mathbb H$ is the upper half-plane, and write $\wp$, $\zeta$, $\sigma$, $g_2$ and $g_3$ for the associated Weierstrass functions and invariants. We use
\begin{equation}\label{eq:wp-equation}
(\wp')^2=4\wp^3-g_2\wp-g_3,
\quad
\zeta'=-\wp,
\end{equation}
and
\[
g_2=60G_4(\Omega),
\quad
g_3=140G_6(\Omega).
\]
For $n\geq3$ set
\begin{equation}\label{eq:Gn}
G_n(\Omega)
=
\sum_{\lambda\in\Omega\setminus\{0\}}\lambda^{-n}.
\end{equation}
The sum vanishes for odd $n$. The Laurent expansion of $\zeta$ is
\begin{equation}\label{eq:zeta-Laurent}
\zeta(t)
=
\frac1t-\sum_{m\geq2}G_{2m}(\Omega)t^{2m-1}.
\end{equation}

\section{Solution of the higher-Schwarzian hierarchy}\label{sec:elliptic-formula}

The following identity is the starting point for the explicit calculation.

\begin{prop}\label{prop:key-zeta}
For $z,t$ away from the poles and zeros of the denominators,
\begin{equation}\label{eq:key-zeta}
\frac{\wp'(z)}{\wp(z+t)-\wp(z)}
=
\zeta(t)-\zeta(2z+t)+2\zeta(z).
\end{equation}
\end{prop}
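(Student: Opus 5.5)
Fix $z$ generic and regard both sides of \eqref{eq:key-zeta} as meromorphic functions of $t$. The plan is to show that they are both elliptic in $t$ with respect to $\Omega$, that they have the same poles and principal parts, and that they agree at one point. Liouville's theorem then closes the argument.

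\textbf{Periodicity.} The left side is obviously $\Omega$-periodic in $t$. On the right, the quasi-periodicity $\zeta(u+\lambda)=\zeta(u)+\eta(\lambda)$ gives the same increment $\eta(\lambda)$ to $\zeta(t)$ and to $\zeta(2z+t)$. These cancel, so the right side is elliptic in $t$.

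\textbf{Poles and residues.} For generic $z$ we have $2z\notin\Omega$, so $\wp(z+t)-\wp(z)$ has exactly two simple zeros mod $\Omega$, namely $t\equiv0$ and $t\equiv-2z$. These come from $\wp(u)=\wp(z)$ with $u=z+t\equiv\pm z$. The poles of $\wp(z+t)$ at $t\equiv -z$ give zeros of the left side, not poles. We then compute residues.
\begin{itemize}
\item At $t=0$ the denominator is $\wp'(z)t+O(t^2)$, so the residue is $1$.
\item At $t=-2z$, write $t=-2z+s$. Then $\wp(-z+s)-\wp(z)=\wp'(-z)s+O(s^2)=-\wp'(z)s+O(s^2)$, so the residue is $-1$.
\end{itemize}
The right side has simple poles at $t\equiv0$ with residue $1$ and at $t\equiv-2z$ with residue $-1$. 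Hence the difference of the two sides is an elliptic function with no poles, and therefore a constant $C(z)$.

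\textbf{The constant.} We determine $C(z)$ by comparing constant terms in the Laurent expansion at $t=0$. On the left, $\wp(z+t)-\wp(z)=\wp'(z)t+\tfrac12\wp''(z)t^2+\dots$, so the left side is $\frac1t-\frac{\wp''(z)}{2\wp'(z)}+O(t)$. On the right, \eqref{eq:zeta-Laurent} gives $\zeta(t)=\frac1t+O(t^3)$, so the constant term is $-\zeta(2z)+2\zeta(z)$. The claim therefore reduces to the classical duplication identity
\[
\zeta(2z)-2\zeta(z)=\frac{\wp''(z)}{2\wp'(z)}.
\]

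\textbf{Main step: proving the duplication identity.} This is the step requiring an actual argument. I would prove it with the same Liouville method in the variable $z$.
\begin{itemize}
\item \emph{Periodicity.} Under $z\mapsto z+\lambda$, the increment of $\zeta(2z)$ is $\eta(2\lambda)=2\eta(\lambda)$, which cancels the increment of $2\zeta(z)$. So the left side is elliptic in $z$.
\item \emph{Poles at $z\in\Omega$.} Here $\zeta(2z)-2\zeta(z)=\frac1{2z}-\frac2z+O(z)=-\frac{3}{2z}+O(z)$. Also $\wp''/\wp'=(6z^{-4}+\dots)/(-2z^{-3}+\dots)=-3/z+O(z)$, so $\frac{\wp''}{2\wp'}=-\frac3{2z}+O(z)$. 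Both sides are odd, so they agree up to $O(z)$.
\item \emph{Poles at half-periods $z\equiv\omega/2$.} The function $\zeta(2z)$ has residue $\tfrac12$ there, and $\zeta(z)$ is regular. On the other side, $\wp'$ has a simple zero there, so $\wp''/(2\wp')$ has residue $\tfrac12$.
\end{itemize}
The difference of the two sides is therefore a holomorphic elliptic function, hence constant. It is also odd, so the constant is $0$.

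Alternatively, one can differentiate the known duplication formula for $\wp(2z)$ in terms of $\wp(z)$. This route is routine but heavier.

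Finally, \eqref{eq:key-zeta} extends from generic $z$ to all admissible $z,t$ by continuity.
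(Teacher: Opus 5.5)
Your proof is correct, but it takes a different route from the paper's. The paper's argument is purely algebraic. It takes the classical addition theorem $\zeta(u+v)=\zeta(u)+\zeta(v)+\tfrac12\frac{\wp'(u)-\wp'(v)}{\wp(u)-\wp(v)}$ and applies it once to the pair $(z+t,-z)$ and once to the pair $(z+t,z)$. Subtracting the two instances cancels the $\wp'(z+t)$ terms and leaves exactly $\wp'(z)/(\wp(z+t)-\wp(z))$.

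The paper's version is two lines, but it rests on a cited formula that is itself usually proved by the kind of Liouville argument you give. Your version is self-contained: it uses only quasi-periodicity, Laurent expansions and oddness. It also makes the pole structure in $t$ explicit, namely simple poles at the two preimages $t\equiv0$ and $t\equiv-2z$. This is the structure behind the paper's later separation of the kernel into the lattice term $\wp(t)-t^{-2}$ and the term $-\wp(2z+t)$.

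Your ``main step'' can be avoided. At $t=-z$ both sides are holomorphic. The left side vanishes there (a double zero), and the right side equals $-\zeta(z)-\zeta(z)+2\zeta(z)=0$. So $C(z)=0$ immediately. Your comparison of constant terms at $t=0$ then gives the duplication identity $\zeta(2z)-2\zeta(z)=\wp''(z)/(2\wp'(z))$ as a consequence rather than a prerequisite. This is exactly what the paper obtains by combining $S_1[\wp]=\zeta(2z)-2\zeta(z)$ with $S_1[f]=f''/(2f')$.
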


\begin{proof}
The classical zeta addition formula, in the normalization used here, gives
(see, for example, \cite{lang,weil})
\begin{equation}\label{eq:zeta-add1}
\zeta(t)-\zeta(z+t)+\zeta(z)
=
\frac12\frac{\wp'(z+t)+\wp'(z)}{\wp(z+t)-\wp(z)}.
\end{equation}
Applying the same formula to $z+(z+t)$ gives
\begin{equation}\label{eq:zeta-add2}
\zeta(2z+t)-\zeta(z+t)-\zeta(z)
=
\frac12\frac{\wp'(z+t)-\wp'(z)}{\wp(z+t)-\wp(z)}.
\end{equation}
Subtracting \eqref{eq:zeta-add2} from \eqref{eq:zeta-add1} yields \eqref{eq:key-zeta}.
\end{proof}

Differentiating \eqref{eq:key-zeta} with respect to $t$ and setting $w=z+t$ gives the elliptic specialization of the projective kernel:
\[
\mathcal P_{\wp}(w,z)
=
\wp(w-z)-\wp(w+z)-\frac1{(w-z)^2}.
\]
This is an identity of meromorphic functions in the uniformizing variables. For $w=z+t$, it separates the lattice term $\wp(t)-t^{-2}$ from the elliptic term $-\wp(2z+t)$. Their diagonal coefficients give the constant Eisenstein contribution and the pullback by multiplication by two in the following theorem.

\begin{thm}\label{thm:closed-formula}
Let $S_n(z)=S_n[\wp](z)$. Then
\begin{align}
S_1(z)&=\zeta(2z)-2\zeta(z),\label{eq:S1}\\
S_2(z)&=-\wp(2z),\label{eq:S2}\\
S_n(z)&=G_n(\Omega)-\frac{1}{(n-1)!}\wp^{(n-2)}(2z),
\quad n\geq3.\label{eq:closed-formula}
\end{align}
Here $G_n(\Omega)=0$ for odd $n$.
\end{thm}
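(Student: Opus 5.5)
The plan is to read the invariants directly off Proposition~\ref{prop:key-zeta}. By definition \eqref{eq:ahar-expansion}, $\sum_{n\ge1}S_n(z)t^{n-1}$ equals $1/t$ minus the left side of \eqref{eq:key-zeta}. So it suffices to expand
\[
\zeta(t)-\zeta(2z+t)+2\zeta(z)
\]
in powers of $t$ at a point $z$ where $\wp'(z)\neq0$ and $2z\notin\Omega$. The identity holds as meromorphic functions of $t$ near $0$, so comparing Laurent coefficients is legitimate. We stay away from the points where $2z\in\Omega$, since these are exactly the zeros of $\wp'$ together with the lattice points.

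For the first term we use \eqref{eq:zeta-Laurent}:
\[
\zeta(t)-\frac1t=-\sum_{m\ge2}G_{2m}(\Omega)t^{2m-1}.
\]
Since $G_n(\Omega)=0$ for odd $n$, this can be written as $-\sum_{n\ge3}G_n(\Omega)t^{n-1}$, where $G_n$ vanishes unless $n$ is even. The second term is holomorphic in $t$ near $0$, and Taylor's theorem gives
\[
\zeta(2z+t)=\sum_{k\ge0}\frac{\zeta^{(k)}(2z)}{k!}t^k.
\]
Using $\zeta'=-\wp$, we have $\zeta^{(k)}=-\wp^{(k-1)}$ for $k\ge1$.

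Combining these expansions,
\[
\frac1t-\frac{\wp'(z)}{\wp(z+t)-\wp(z)}
=\bigl(\zeta(2z)-2\zeta(z)\bigr)-\sum_{k\ge1}\frac{\wp^{(k-1)}(2z)}{k!}t^k+\sum_{n\ge3}G_n(\Omega)t^{n-1}.
\]
We now match the coefficient of $t^{n-1}$ with $S_n$:
\begin{itemize}
\item The constant term gives \eqref{eq:S1}.
\item The $t^1$ coefficient gives $S_2=-\wp(2z)$, because there is no Eisenstein contribution at this order; the series for $\zeta(t)-1/t$ starts at $t^3$.
\item For $n\ge3$, the $t^{n-1}$ coefficient gives $G_n(\Omega)-\wp^{(n-2)}(2z)/(n-1)!$.
\end{itemize}
The stated convention $G_n=0$ for odd $n$ is consistent with the lattice sum, since $\lambda\mapsto-\lambda$ permutes $\Omega\setminus\{0\}$.

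There is no real obstacle here. The only points needing care are the sign bookkeeping between \eqref{eq:ahar-expansion} and \eqref{eq:key-zeta}, and checking that the identity of meromorphic functions extends across the excluded points by continuity. As a consistency check, we can verify $S_2$ against $6S_2=\{\wp,z\}$ or against the recurrence \eqref{eq:ahar-recurrence}, for instance $4S_3=S_2'$, which requires $-\tfrac12\wp'(2z)\cdot\tfrac{2}{4}\cdot2=\dots$. That check is optional and confirms the normalization.
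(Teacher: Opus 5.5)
Your proposal is correct and follows the paper's proof: you substitute Proposition~\ref{prop:key-zeta} into the defining expansion, expand $\zeta(t)$ by its Laurent series and $\zeta(2z+t)$ by Taylor's theorem with $\zeta'=-\wp$, and match the coefficients of $t^{n-1}$. The only blemish is the garbled optional check at the end; written out, it is $4S_3=-2\wp'(2z)=S_2'$, which does hold.
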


\begin{proof}
By definition,
\[
\frac{\wp'(z)}{\wp(z+t)-\wp(z)}
=
\frac1t-\sum_{n\geq1}S_n(z)t^{n-1}.
\]
Use Proposition~\ref{prop:key-zeta}. The constant term of
\[
\zeta(t)-\zeta(2z+t)+2\zeta(z)
\]
is $2\zeta(z)-\zeta(2z)$, which is $-S_1(z)$. The coefficient of $t$ is $\wp(2z)$, which gives \eqref{eq:S2}. For $n\geq3$, the coefficient of $t^{n-1}$ is
\[
\frac{1}{(n-1)!}\wp^{(n-2)}(2z)-G_n(\Omega),
\]
where the Eisenstein term occurs only for even $n$. Comparison with the coefficient $-S_n(z)$ gives \eqref{eq:closed-formula}.
\end{proof}

The ordinary Schwarzian identity is immediate from \eqref{eq:first-two} and \eqref{eq:S2}:
\[
\{\wp(z),z\}=-6\wp(2z).
\]

The closed formula gives two recurrences which do not appear in the general Aharonov theory.

\begin{cor}\label{cor:differential-recurrence}
For every $n\geq2$,
\begin{equation}\label{eq:differential-recurrence}
S_n'(z)
=
2n\bigl(S_{n+1}(z)-G_{n+1}(\Omega)\bigr),
\end{equation}
where $G_3=0$ and, for $n=2$, the right side is interpreted using \eqref{eq:closed-formula} for $S_3$.
\end{cor}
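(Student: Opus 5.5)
The plan is to differentiate the closed formula of Theorem~\ref{thm:closed-formula} directly, splitting into the case $n\geq3$ and the boundary case $n=2$. No appeal to the Aharonov recurrence \eqref{eq:ahar-recurrence} is needed. The only inputs are the chain rule for $z\mapsto 2z$ and the bookkeeping of the Eisenstein constants.

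For $n\geq3$, the constant $G_n(\Omega)$ in \eqref{eq:closed-formula} has zero derivative, so
\[
S_n'(z)=-\frac{2}{(n-1)!}\,\wp^{(n-1)}(2z).
\]
Next apply \eqref{eq:closed-formula} with $n+1$ in place of $n$ (legitimate since $n+1\geq4$):
\[
S_{n+1}(z)-G_{n+1}(\Omega)=-\frac{1}{n!}\,\wp^{(n-1)}(2z).
\]
Multiplying by $2n$ gives $-\frac{2n}{n!}\wp^{(n-1)}(2z)=-\frac{2}{(n-1)!}\wp^{(n-1)}(2z)$, which is exactly $S_n'(z)$.

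For $n=2$, use \eqref{eq:S2} to get $S_2'(z)=-2\wp'(2z)$. Formula \eqref{eq:closed-formula} with $n=3$ gives $S_3(z)=G_3(\Omega)-\tfrac12\wp'(2z)$, and $G_3=0$ because the sum over $\Omega\setminus\{0\}$ is odd under $\lambda\mapsto-\lambda$. Hence $4(S_3-G_3)=-2\wp'(2z)=S_2'(z)$, which is the stated identity with the interpretation indicated in the corollary.

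There is no real obstacle here. The only point needing care is consistency of normalization: the factor $2n$ comes from the chain rule factor $2$ together with the ratio $n!/(n-1)!=n$. The constant term must also be handled correctly. Writing $S_{n+1}-G_{n+1}$ removes the Eisenstein term uniformly, whether $n+1$ is odd (where $G_{n+1}$ vanishes) or even. The identity holds as meromorphic functions away from the poles, that is, away from $2z\in\Omega$.
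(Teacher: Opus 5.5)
Your proposal is correct and matches the paper's proof essentially step for step. For $n\geq3$ you differentiate the closed formula with the chain rule and compare with its instance at order $n+1$. For $n=2$ you use $S_2=-\wp(2z)$ and $S_3=-\tfrac12\wp'(2z)$ (since $G_3=0$) to get $S_2'=4(S_3-G_3)$.
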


\begin{proof}
For $n=2$, equation \eqref{eq:S2} gives
\[
S_2'(z)=-2\wp'(2z).
\]
On the other hand, \eqref{eq:closed-formula} with $n=3$ gives
$S_3(z)=-\wp'(2z)/2$, since $G_3=0$. Thus
$S_2'=4(S_3-G_3)$, as required.

Now let $n\geq3$. Differentiating \eqref{eq:closed-formula} and using the
chain rule yields
\[
S_n'(z)=-\frac{2}{(n-1)!}\wp^{(n-1)}(2z).
\]
The same formula with $n$ replaced by $n+1$ says
\[
S_{n+1}(z)-G_{n+1}(\Omega)
=-\frac{1}{n!}\wp^{(n-1)}(2z).
\]
Multiplication by $2n$ proves the identity.
\end{proof}

\begin{cor}\label{cor:algebraic-recurrence}
For every $n\geq4$,
\begin{equation}\label{eq:algebraic-recurrence}
(n-3)S_n
=
2(n-1)G_n(\Omega)
-
\sum_{r=2}^{n-2}S_rS_{n-r}.
\end{equation}
Thus, once the lattice Eisenstein series are fixed, the higher-Schwarzian sequence satisfies a derivative-free quadratic recurrence.
\end{cor}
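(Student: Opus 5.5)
Combine the universal Aharonov recurrence \eqref{eq:ahar-recurrence} with the elliptic differential recurrence of Corollary~\ref{cor:differential-recurrence}, so that the derivative term cancels. Fix $n\geq4$. Then \eqref{eq:ahar-recurrence} reads
\[
(n+1)S_n=S_{n-1}'+\sum_{r=2}^{n-2}S_rS_{n-r}.
\]
Corollary~\ref{cor:differential-recurrence} with $n-1\geq3$ in place of $n$ gives
\[
S_{n-1}'=2(n-1)\bigl(S_n-G_n(\Omega)\bigr).
\]
Substituting this into the Aharonov recurrence, I get
\[
(n+1)S_n=2(n-1)S_n-2(n-1)G_n(\Omega)+\sum_{r=2}^{n-2}S_rS_{n-r}.
\]
Since $2(n-1)-(n+1)=n-3$, rearranging yields \eqref{eq:algebraic-recurrence} directly.

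The only point needing care is the index range. The Aharonov recurrence requires $n\geq3$. The differential recurrence is applied at index $n-1$, which must be at least $2$. Both conditions hold for $n\geq4$; this is also where the factor $n-3$ is nonzero.

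The case $n-1=2$, i.e.\ $n=3$, is covered by the special interpretation in Corollary~\ref{cor:differential-recurrence}. However, for $n=3$ the identity degenerates to $0=0$, since $G_3=0$ and the sum is empty. This explains the hypothesis $n\geq4$.

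I do not expect a genuine obstacle here. The only subtlety is confirming that \eqref{eq:ahar-recurrence} holds for $S_n[\wp]$ on the domain where $\wp'\neq0$. The resulting identity is one of meromorphic functions, so it extends everywhere by analytic continuation.

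As an optional sanity check, take $n=4$. The recurrence gives $S_4=6G_4-S_2^2=6G_4-\wp(2z)^2$. Compare this with \eqref{eq:closed-formula}: $S_4=G_4-\wp''(2z)/6$. Using $\wp''=6\wp^2-g_2/2$ and $g_2=60G_4$, this becomes $G_4-\wp^2+5G_4$, which agrees.
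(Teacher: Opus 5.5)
Your proposal is correct and is the paper's argument: substitute $S_{n-1}'=2(n-1)\bigl(S_n-G_n(\Omega)\bigr)$ from Corollary~\ref{cor:differential-recurrence} into the Aharonov recurrence \eqref{eq:ahar-recurrence} and collect the $S_n$ terms. Your index bookkeeping, the degenerate case $n=3$, and the check $S_4=6G_4(\Omega)-\wp(2z)^2$ are all accurate.
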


\begin{proof}
The universal Aharonov recurrence \eqref{eq:ahar-recurrence} reads
\[
(n+1)S_n=S_{n-1}'+\sum_{r=2}^{n-2}S_rS_{n-r}.
\]
Substitute
\[
S_{n-1}'=2(n-1)(S_n-G_n)
\]
from Corollary~\ref{cor:differential-recurrence}. Moving the term
$2(n-1)S_n$ to the left and multiplying by $-1$ gives exactly
\eqref{eq:algebraic-recurrence}.
\end{proof}

\begin{cor}\label{cor:polynomial-structure}
Put $W=\wp(2z)$ and $V=\wp'(2z)$, and assign weights $2,3,4,6$ to $W,V,g_2,g_3$, respectively. For every $m\geq1$,
\[
S_{2m}\in\mathbb Q[g_2,g_3,W],
\quad
S_{2m+1}\in V\,\mathbb Q[g_2,g_3,W].
\]
These expressions are weighted homogeneous of weights $2m$ and $2m+1$. As polynomials in $W$, their leading terms are
\[
S_{2m}=-W^m+\text{terms of lower $W$-degree},
\]
and
\[
S_{2m+1}
=V\left(-\frac12W^{m-1}+\text{terms of lower $W$-degree}\right).
\]
\end{cor}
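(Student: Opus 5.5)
We argue from the closed formula of Theorem~\ref{thm:closed-formula}, so the statement reduces to a structural fact about derivatives of $\wp$ evaluated at $u=2z$. We have $S_2=-W$, and $S_n=G_n(\Omega)-\wp^{(n-2)}(u)/(n-1)!$ for $n\geq3$. Since $G_{2m}(\Omega)$ is a polynomial in $g_2,g_3$ with rational coefficients, of weight $2m$, the only thing to prove is the following. For $k\geq0$, $\wp^{(2k)}(u)\in\mathbb Q[g_2,g_3,W]$ is weighted homogeneous of weight $2k+2$, and $\wp^{(2k+1)}(u)\in V\,\mathbb Q[g_2,g_3,W]$ is weighted homogeneous of weight $2k+3$. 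We also need their leading terms in $W$.

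We prove this by induction on the order of the derivative, using the differential equation \eqref{eq:wp-equation}. In the form $V^2=4W^3-g_2W-g_3$ it is homogeneous of weight $6$. Differentiating gives $\wp''=6\wp^2-g_2/2$. Write $\partial=d/du$, so that $\partial W=V$ and $\partial V=6W^2-g_2/2$. Then for any $P\in\mathbb Q[g_2,g_3,W]$:
\begin{itemize}
\item $\partial P=P'(W)\,V$;
\item $\partial(VP)=(6W^2-g_2/2)P+V^2P'(W)=(6W^2-g_2/2)P+(4W^3-g_2W-g_3)P'(W)$, which lies in $\mathbb Q[g_2,g_3,W]$.
\end{itemize}
In both cases $\partial$ raises the weight by exactly $1$, because $W,V,g_2,g_3$ have weights $2,3,4,6$ and the relations above are homogeneous. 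This establishes the parity pattern and the homogeneity by induction.

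For the leading terms we track the top $W$-coefficients. Suppose $\wp^{(2k)}=a_kW^{k+1}+\cdots$. Then $\wp^{(2k+1)}=V\bigl((k+1)a_kW^k+\cdots\bigr)$. Applying $\partial(VP)$ to $P=(k+1)a_kW^k$ gives top coefficient $6(k+1)a_k+4k(k+1)a_k$. Hence $a_{k+1}=(k+1)(2k+3)\cdot 2a_k$, and with $a_0=1$ this yields $a_k=(2k+1)!$. Indeed $(2k+3)(2k+2)=2(k+1)(2k+3)$, which checks the step. Consequently $\wp^{(2k)}=(2k+1)!\,W^{k+1}+\cdots$ and $\wp^{(2k+1)}=V\bigl((2k+2)!/2\cdot W^k+\cdots\bigr)$.

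Now we substitute. For $S_{2m}$ with $m\geq2$, take $n-2=2(m-1)$; then $S_{2m}=G_{2m}-(2m-1)!\,W^m/(2m-1)!+\cdots=-W^m+\cdots$. The term $G_{2m}$ has $W$-degree $0<m$, so it does not affect the leading term. The case $m=1$ is immediate from $S_2=-W$. For $S_{2m+1}$, take $n-2=2m-1=2(m-1)+1$; then $S_{2m+1}=-V\bigl((2m)!/2\cdot W^{m-1}+\cdots\bigr)/(2m)!=V\bigl(-\tfrac12W^{m-1}+\cdots\bigr)$, since $G_{2m+1}=0$.

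The main obstacle is the bookkeeping of the leading-coefficient recursion, and in particular the verification that $a_k=(2k+1)!$ holds. We need to check that the $V^2$ substitution contributes $4W^3P'$ at top degree in addition to the $6W^2P$ term. Everything else is routine weight counting.
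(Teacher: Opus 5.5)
Your proof is correct and follows the paper's argument. Both reduce, via Theorem~\ref{thm:closed-formula}, to the inductive structure of $\wp^{(k)}$ coming from $(\wp')^2=4\wp^3-g_2\wp-g_3$ and $\wp''=6\wp^2-g_2/2$, combined with weight counting and the leading coefficients $(2k+1)!$ and $(2k+2)!/2$. Your explicit recursion $a_{k+1}=(2k+2)(2k+3)a_k$ simply makes precise the paper's remark that the top power of $\wp$ arises from repeatedly differentiating the leading term.
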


\begin{proof}
Give $\wp$, $\wp'$, $g_2$ and $g_3$ the respective weights $2,3,4,6$.
The differential equation
\[
(\wp')^2=4\wp^3-g_2\wp-g_3
\]
and its derivative $\wp''=6\wp^2-g_2/2$ show inductively that every even
derivative of $\wp$ is a polynomial in $\wp,g_2,g_3$, whereas every odd
derivative is $\wp'$ times such a polynomial. Since differentiation raises
weight by one, $\wp^{(2m-2)}$ has weight $2m$. Its highest power of $\wp$
is obtained by repeatedly differentiating the leading term, and is
$(2m-1)!\wp^m$. Differentiation then gives
\[
\wp^{(2m-1)}
=\wp'\left(\frac{(2m)!}{2}\wp^{m-1}+\text{terms of lower $\wp$-degree}\right).
\]
Substitution into Theorem~\ref{thm:closed-formula} produces the stated leading
terms. The constant $G_{2m}$ is a polynomial in $g_2,g_3$ of weight $2m$ and
does not affect the highest $W$-degree. The case $m=1$ follows directly from
$S_2=-W$ and $S_3=-V/2$.
\end{proof}

For example,
\begin{align*}
S_2&=-W,\\
S_3&=-\frac12V,\\
S_4&=-W^2+\frac{g_2}{10},\\
S_5&=-\frac12WV,\\
S_6&=-W^3+\frac{3g_2}{20}W+\frac{3g_3}{28},\\
S_7&=V\left(-\frac12W^2+\frac{g_2}{40}\right),\\
S_8&=-W^4+\frac{g_2}{5}W^2+\frac{g_3}{7}W-\frac{g_2^2}{600}.
\end{align*}

The generating series can also be expressed through sigma functions.
On the uniformizing plane, set
\begin{equation}\label{eq:Kronecker-function}
K_\sigma(u,t)=\frac{\sigma(u+t)}{\sigma(u)\sigma(t)},
\qquad
\Phi(u,t)=K_\sigma(u,t)e^{-\zeta(u)t}.
\end{equation}
The sigma quotient $K_\sigma$ is meromorphic in both variables. Its
quasi-periodicity is the familiar analytic form of the Kronecker
construction \cite{weil}. The exponential normalization used for
$\Phi$ gives
\[
\partial_t\log\Phi(u,t)
=\zeta(u+t)-\zeta(t)-\zeta(u).
\]
Adding $S_1(z)$ to \eqref{eq:key-zeta} therefore gives
\begin{equation}\label{eq:Kronecker-generating}
\frac1t-\sum_{n\geq2}S_n[\wp](z)t^{n-1}
=-\partial_t\log\Phi(2z,t).
\end{equation}
Here the logarithmic derivative means $\partial_t\Phi/\Phi$, so no
choice of logarithm is required.

The normalization in \eqref{eq:Kronecker-function} matters. For fixed
$t\notin\Omega$, the expansions $\sigma(u)=u+O(u^5)$ and
$\zeta(u)=u^{-1}+O(u^3)$ give
\[
\Phi(u,t)=u^{-1}e^{-t/u}\bigl(1+O(u)\bigr)
\qquad (u\longrightarrow0).
\]
Thus $\Phi$ has an essential singularity in $u$ at the origin. It is
an analytic, exponentially normalized Kronecker expression on the
punctured uniformizing domain, not a meromorphic section across the
zero divisor of $E\times E$. Its logarithmic derivative, however, is
meromorphic in the uniformizing variables and is the only expression
used in \eqref{eq:Kronecker-generating}. The algebraic Kronecker
section of the Poincar\'e bundle is a distinct meromorphic object;
see \cite{sprang}. No algebraic extension of the exponential factor
is used here.

\section{Geometric descent to elliptic double covers}\label{sec:double-covers}

The appearance of \(2z\) in Theorem~\ref{thm:closed-formula} reflects the
double-cover structure of the Weierstrass map
\[
\wp:E\longrightarrow \mathbb{P}^1,
\]
whose nontrivial covering involution is \(z\mapsto -z\).

\begin{thm}\label{thm:two-torsion-symmetry}
Let $E=\mathbb C/\Omega$ and let $a\in E$. The following are equivalent.
\begin{enumerate}
\item There is $M_a\in\operatorname{PGL}_2(\mathbb C)$ such that
\[
\wp(z+a)=M_a(\wp(z)).
\]
\item Translation by $a$ descends through the double cover $\wp:E\to\mathbb P^1$.
\item $a\in E[2]$.
\end{enumerate}
Consequently, for every $n\geq2$ and every $T\in E[2]$,
\begin{equation}\label{eq:E2-invariance}
S_n[\wp](z+T)=S_n[\wp](z).
\end{equation}
\end{thm}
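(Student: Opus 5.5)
I would prove the cycle of implications (3)$\Rightarrow$(1)$\Rightarrow$(2)$\Rightarrow$(3), and then deduce \eqref{eq:E2-invariance} from (1) together with the $\operatorname{PGL}_2(\mathbb C)$-invariance of $S_n$ for $n\geq2$.

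For (3)$\Rightarrow$(1), take $a=\omega_i/2$ with $e_i=\wp(\omega_i/2)$. The classical half-period formula is
\[
\wp(z+\omega_i/2)=e_i+\frac{(e_i-e_j)(e_i-e_k)}{\wp(z)-e_i},
\]
where $\{i,j,k\}=\{1,2,3\}$. The right side is a M\"obius transformation of $\wp(z)$, and it is nondegenerate because the $e_i$ are distinct.

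If one prefers not to quote this formula, it follows from a short argument. The function $\wp(z+a)$ is even in $z$, because $-a\equiv a$ modulo $\Omega$. Hence it is a rational function of $\wp$. Its only pole on $E$ is a double pole at $z=-a$, so it has degree one in $\wp$, with a simple pole at $\wp=e_i$. The case $a=0$ is trivial.

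For (1)$\Rightarrow$(2), the map $M_a$ is exactly the descended map: it satisfies $\wp\circ\tau_a=M_a\circ\wp$, where $\tau_a(z)=z+a$.

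For (2)$\Rightarrow$(3), suppose $\tau_a$ descends to some self-map $\bar\tau$ of $\mathbb P^1$. Then $\tau_a$ permutes the fibres of $\wp$, so it commutes with the covering involution $\iota(z)=-z$. Indeed, $\tau_a(-z)$ lies in the fibre of $\tau_a(z)$, so it equals either $\tau_a(z)$ or $-\tau_a(z)$. The first option forces $2z\equiv0$, which happens only at finitely many $z$. So for generic $z$ we have $-z+a=-z-a$, that is, $2a\in\Omega$.

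This step needs a little care, since "descends" should mean there is a map $\bar\tau$ with $\wp\circ\tau_a=\bar\tau\circ\wp$. A holomorphic self-map of $\mathbb P^1$ of this kind has degree one, because both $\wp\circ\tau_a$ and $\wp$ have degree two. So $\bar\tau\in\operatorname{PGL}_2(\mathbb C)$, and the generic-point argument above applies. I expect this degree and genericity bookkeeping to be the only mildly delicate point.

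Finally, for $T\in E[2]$, statement (1) gives $\wp(z+T)=M_T(\wp(z))$. Since $S_n$ is a differential expression in $z$, translation gives $S_n[\wp(\cdot+T)](z)=S_n[\wp](z+T)$. By the M\"obius invariance of $S_n$ for $n\geq2$ \cite{aharonov},
\[
S_n[\wp](z+T)=S_n[M_T\circ\wp](z)=S_n[\wp](z).
\]
As a consistency check, Theorem~\ref{thm:closed-formula} expresses $S_n$ through $\wp^{(n-2)}(2z)$. Replacing $z$ by $z+T$ changes $2z$ by $2T\in\Omega$, which leaves these terms unchanged, so \eqref{eq:E2-invariance} also follows directly.
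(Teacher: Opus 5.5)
Your proof is correct and is essentially the paper's argument made concrete: your fibre computation forcing $-z+a\equiv -z-a$ is a pointwise version of the paper's normalizer identity $t_a\circ[-1]\circ t_{-a}=t_{2a}\circ[-1]$, your evenness-plus-degree argument for (3)$\Rightarrow$(1) is the explicit form of descending a translation that commutes with $[-1]$ to an automorphism of $E/\{\pm1\}\simeq\mathbb P^1$, and you deduce \eqref{eq:E2-invariance} from M\"obius invariance exactly as the paper does. Your additional check via Theorem~\ref{thm:closed-formula}, using $2T\in\Omega$, is also valid and non-circular, since that theorem is proved earlier and independently.
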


\begin{proof}
The equivalence of the first two statements is the definition of descent to the quotient. The deck group of $\wp$ is generated by the involution $[-1]$. If translation $t_a$ descends, it must normalize this deck group. Since
\[
t_a\circ[-1]\circ t_{-a}=t_{2a}\circ[-1],
\]
normalization forces $t_{2a}=1$, hence $2a=0$.

Conversely, let $a\in E[2]$. Translation by $a$ commutes with $[-1]$, hence descends to an automorphism of the quotient $E/\{\pm1\}\simeq\mathbb P^1$. This automorphism is linear fractional, proving the first statement. Equation \eqref{eq:E2-invariance} follows from projective invariance of $S_n$, $n\geq2$.
\end{proof}

\begin{cor}\label{cor:descent-degree}
For every $n\geq2$, the elliptic function $S_n[\wp]$ descends through the quotient $E\to E/E[2]$. In the coordinate $u=2z$ on the quotient, the descended function is
\[
s_n(u)=
\begin{cases}
-\wp(u),&n=2,\\
G_n(\Omega)-\dfrac{1}{(n-1)!}\wp^{(n-2)}(u),&n\geq3.
\end{cases}
\]
It has a unique pole, at the origin, of exact order $n$. Consequently $s_n:E/E[2]\to\mathbb P^1$ has degree $n$.
\end{cor}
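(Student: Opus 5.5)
The plan is to read the descent and the explicit form of the descended function straight off Theorem~\ref{thm:closed-formula}, and then obtain the pole and degree statements from the Laurent expansion of $\wp$ at the origin. Descent could also be deduced from \eqref{eq:E2-invariance} in Theorem~\ref{thm:two-torsion-symmetry}. The explicit formula makes this more transparent, since each $S_n[\wp](z)$, $n\geq2$, is a function of $2z$ alone.

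First step. Multiplication by two, $[2]:E\to E$, is surjective with kernel $E[2]$, so it identifies $E/E[2]$ with $E=\mathbb C/\Omega$ via $z\mapsto u=2z$. By \eqref{eq:S2} and \eqref{eq:closed-formula}, $S_n[\wp](z)=s_n(2z)$, where $s_n$ is the stated function of $u$. The function $s_n$ is $\Omega$-periodic in $u$, being $\wp$, a derivative of $\wp$, or a constant added to one. Hence $S_n[\wp]$ factors as $s_n\circ[2]$, which is exactly descent through $E\to E/E[2]$. This also shows once more that $S_n[\wp]$ is invariant under $E[2]$.

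Second step: poles. The only poles of $\wp^{(k)}$ on $\mathbb C/\Omega$ are at the lattice points, that is, at the origin of $E/E[2]$. From $\wp(u)=u^{-2}+O(u^2)$ we get $\wp^{(n-2)}(u)=(-1)^{n}(n-1)!\,u^{-n}+O(1)$ after differentiating $n-2$ times. Therefore
\[
s_n(u)=(-1)^{n+1}u^{-n}+(\text{holomorphic}).
\]
The constant $G_n(\Omega)$ does not change this. For $n=2$ the expansion is simply $-u^{-2}+O(u^2)$. So the origin is the unique pole and it has exact order $n$.

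Third step: degree. A nonconstant elliptic function has degree, as a map to $\mathbb P^1$, equal to the sum of the orders of its poles in a fundamental domain. With a single pole of order $n$, the map $s_n:E/E[2]\to\mathbb P^1$ has degree $n$.

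The only real point to watch is the bookkeeping of the identification $E/E[2]\cong E$ via $u=2z$: one should be explicit that the quotient is taken with respect to the lattice $\tfrac12\Omega$ in the $z$-coordinate and that this becomes $\Omega$ in the $u$-coordinate. The remaining steps are routine Laurent computations.
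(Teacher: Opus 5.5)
Your proposal is correct and follows essentially the paper's argument: Theorem~\ref{thm:closed-formula} combined with the Laurent expansion $\wp^{(n-2)}(u)=(-1)^{n}(n-1)!\,u^{-n}+O(1)$ gives a unique pole of exact order $n$ at the origin, and hence degree $n$. The only variation is that you read the descent directly off $S_n[\wp](z)=s_n(2z)$ with $s_n$ $\Omega$-periodic, whereas the paper cites the two-torsion invariance of Theorem~\ref{thm:two-torsion-symmetry}; both are valid, and your route obtains the descent and the formula for $s_n$ in a single step.
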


\begin{proof}
The descent follows from Theorem~\ref{thm:two-torsion-symmetry}, and the displayed formula follows from Theorem~\ref{thm:closed-formula}. Since
\[
\wp^{(n-2)}(u)=(-1)^{n-2}(n-1)!u^{-n}+O(u^{2-n}),
\]
the pole at the origin has exact order $n$, and there are no other poles on the quotient.
\end{proof}

For a nonzero half-period $T$ with $e_T=\wp(T)$, the descended transformation can also be written explicitly as
\[
\wp(z+T)
=e_T+
\frac{(e_T-e_{T'})(e_T-e_{T''})}{\wp(z)-e_T},
\]
where the three nonzero two-torsion points are $T,T',T''$.

The next result removes the choice of Weierstrass coordinate.

\begin{thm}\label{thm:double-cover}
Let $E$ be a complex elliptic curve and let
\[
f:E\longrightarrow\mathbb P^1
\]
be a meromorphic map of degree two. Let $\iota$ be its deck involution. Then there are $Q\in E$, a lattice uniformization $E\simeq\mathbb C/\Omega$, and $M\in\operatorname{PGL}_2(\mathbb C)$ such that
\begin{equation}\label{eq:double-cover-normal-form}
f(z)=M(\wp(z-Q)).
\end{equation}
For every $n\geq2$,
\begin{equation}\label{eq:double-cover-Sn}
S_n[f](z)=S_n[\wp](z-Q).
\end{equation}
The right side is independent of the choice of $Q$ satisfying \eqref{eq:double-cover-normal-form}.
\end{thm}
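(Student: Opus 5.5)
The plan is to reduce everything to the Weierstrass cover using the fixed points of the deck involution, and then apply projective invariance.

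\textbf{Normal form.} First fix any uniformization $E\simeq\mathbb C/\Omega$. The deck involution $\iota$ is a nontrivial automorphism of $E$ with $f\circ\iota=f$. Since $\deg f=2$, the quotient $E/\langle\iota\rangle$ is $\mathbb P^1$, so $\iota$ has fixed points; by Riemann--Hurwitz it has exactly four. An automorphism of $\mathbb C/\Omega$ has the form $z\mapsto \alpha z+\beta$ with $\alpha\Omega=\Omega$. A translation with $\beta\notin\Omega$ has no fixed points, so $\alpha\neq1$.

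\textbf{The linear part is $-1$.} If $\alpha\neq\pm1$, the fixed-point count is $|1-\alpha|^2$, which is $1$ or $3$ for $\alpha$ a primitive $4$th or $6$th root of unity. Neither equals $4$, so $\alpha=-1$. Hence $\iota(z)=-z+\beta$.

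\textbf{Choosing $Q$ and $M$.} Choose $Q$ with $2Q=\beta$. Then $\iota(z)=2Q-z$, that is, $\iota$ is the reflection $z-Q\mapsto -(z-Q)$. The function $g(z)=\wp(z-Q)$ is a degree-two function invariant under $\iota$. Both $f$ and $g$ therefore factor through the quotient $E/\langle\iota\rangle\simeq\mathbb P^1$ by degree-one maps, i.e.\ by automorphisms of $\mathbb P^1$. Composing one with the inverse of the other gives $M\in\operatorname{PGL}_2(\mathbb C)$ with $f=M\circ g$, which is \eqref{eq:double-cover-normal-form}.

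\textbf{Transferring the invariants.} By projective invariance of $S_n$ for $n\geq2$, we get $S_n[f]=S_n[\wp(\cdot-Q)]$. The Aharonov expansion \eqref{eq:ahar-expansion} is visibly equivariant under translation of the variable $z$, so $S_n[\wp(\cdot-Q)](z)=S_n[\wp](z-Q)$. This gives \eqref{eq:double-cover-Sn}.

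\textbf{Independence of $Q$.} Suppose $Q'$ also satisfies \eqref{eq:double-cover-normal-form} with some $M'$ (in the same uniformization). Then $\wp(z-Q')$ and $\wp(z-Q)$ have the same deck involution, namely that of $f$. Comparing the involutions $z\mapsto 2Q-z$ and $z\mapsto 2Q'-z$ gives $2Q=2Q'$, so $Q'-Q\in E[2]$. By \eqref{eq:E2-invariance} in Theorem~\ref{thm:two-torsion-symmetry}, the right side of \eqref{eq:double-cover-Sn} is unchanged.

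The formula is stated in a uniformizing coordinate, so rescaling $\Omega$ is governed by \eqref{eq:scaling-Sn}. I would note that the statement is read with the uniformization fixed.

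\textbf{Main obstacle.} The delicate step is excluding the linear parts $\alpha=\pm i$ and primitive cube or sixth roots of unity, which occur on the CM curves $j=1728$ and $j=0$. The fixed-point count $|1-\alpha|^2$ handles this cleanly. Alternatively one can argue that $\iota^2=1$ forces $\alpha=\pm1$, and that $\alpha=1$ gives no fixed points.
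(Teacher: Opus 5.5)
Your proposal is correct and follows essentially the paper's route: Riemann--Hurwitz gives fixed points of $\iota$, and the linear part of $\iota$ is forced to be $-1$. Your alternative $\iota^2=1\Rightarrow\alpha^2=1$ is exactly the paper's ``a nontrivial involution fixing the origin is inversion.'' So $\iota(z)=2Q-z$, $f$ factors through $\wp(\cdot-Q)$ by a M\"obius map, and projective invariance together with $E[2]$-invariance finishes. Your deduction of $2Q=2Q'$ from equality of deck involutions makes explicit a step the paper leaves tacit.

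One small slip: the fixed-point counts $|1-\alpha|^2$ are $2$ for $\alpha=\pm i$, $3$ for primitive cube roots, and $1$ for primitive sixth roots, not ``$1$ or $3$ for primitive $4$th or $6$th roots.'' None of these equals $4$, so your conclusion is unaffected.
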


\begin{proof}
Since \(f\) has degree two, its nontrivial deck transformation
\(\iota\) is an involution and
\[
f\circ\iota=f.
\]
The Riemann--Hurwitz formula gives
\[
2g(E)-2
=
2\bigl(2g(\mathbb P^1)-2\bigr)
+
\sum_{P\in E}(e_P-1).
\]
Since \(g(E)=1\) and \(g(\mathbb P^1)=0\), this becomes
\[
0=-4+\sum_{P\in E}(e_P-1).
\]
Thus \(f\) has four ramification points, each of ramification
index \(2\). These are precisely the fixed points of the deck
involution \(\iota\).

Choose a fixed point \(Q_0\) of \(\iota\). After translating \(Q_0\)
to the origin, the conjugate
\[
\widetilde{\iota}
=
t_{-Q_0}\circ\iota\circ t_{Q_0}
\]
is an automorphism of \(E\) fixing the origin. Hence
\(\widetilde{\iota}\) is a group automorphism of \(E\). Since it is
a nontrivial involution, it must be the inversion map
\[
P\longmapsto -P.
\]
Consequently,
\[
\iota(P)=2Q_0-P.
\]
Writing \(T=2Q_0\), we may therefore express the deck involution as
\[
\iota(P)=T-P.
\]

More generally, let \(Q\in E\) satisfy \(2Q=T\). Then
\[
\iota(Q+P)
=
T-Q-P
=
Q-P,
\]
and hence
\[
f(Q+P)=f(Q-P).
\]
Thus the meromorphic function
\[
P\longmapsto f(Q+P)
\]
is invariant under \(P\mapsto -P\), and therefore factors through
the quotient
\[
E\longrightarrow E/\{\pm1\}\simeq\mathbb P^1.
\]
Under a lattice uniformization \(E\simeq\mathbb C/\Omega\), the
standard degree-two quotient map for this involution is the
Weierstrass function
\[
\wp:\mathbb C/\Omega\longrightarrow\mathbb P^1.
\]
Since both \(P\mapsto f(Q+P)\) and \(\wp(P)\) are degree-two maps
realizing the same quotient, they differ by an automorphism of
\(\mathbb P^1\). Hence there exists
\(M\in\operatorname{PGL}_2(\mathbb C)\) such that
\[
f(Q+P)=M(\wp(P)).
\]
Replacing \(P\) by \(z-Q\) gives
\[
f(z)=M(\wp(z-Q)),
\]
which proves \eqref{eq:double-cover-normal-form}.

The Aharonov invariants are unchanged by postcomposition with a
Möbius transformation. Therefore, for every \(n\geq2\),
\[
S_n[f](z)
=
S_n[\wp(\,\cdot-Q\,)](z)
=
S_n[\wp](z-Q),
\]
which is \eqref{eq:double-cover-Sn}.

Finally, if \(Q'\) is another point satisfying \(2Q'=T\), then
\[
2(Q'-Q)=0,
\]
so \(Q'-Q\in E[2]\). By
Theorem~\ref{thm:two-torsion-symmetry}, the functions
\(S_n[\wp]\) are invariant under translation by points of \(E[2]\).
It follows that
\[
S_n[\wp](z-Q')
=
S_n[\wp](z-Q),
\]
and hence the right-hand side of \eqref{eq:double-cover-Sn} is
independent of the choice of \(Q\).
\end{proof}

\begin{cor}\label{cor:poles-ramification}
Let
\[
f:E\longrightarrow \mathbb P^1
\]
be a meromorphic map of degree two, and let \(R_f\) denote its reduced
ramification divisor. Fix a nonzero invariant differential \(\omega\) on
\(E\), and compute the Aharonov invariants with respect to a uniformizing
coordinate whose differential is \(\omega\).

Then, for every \(n\geq2\),
\[
\operatorname{div}_{\infty}\bigl(S_n^\omega[f]\bigr)=nR_f.
\]
Equivalently, the poles of \(S_n^\omega[f]\) are precisely the four
ramification points of \(f\), and each of them has order \(n\).

More precisely, let \(P\) be a ramification point of \(f\), and let \(w\)
be the local uniformizing coordinate centered at \(P\), normalized by
\[
w(P)=0,
\quad
dw=\omega.
\]
Then
\begin{equation}\label{eq:principal-part}
S_n^\omega[f](P+w)
=
\frac{(-1)^{n-1}}{2^n w^n}
+
O(1).
\end{equation}
Consequently,
\[
\deg \operatorname{div}_{0}\bigl(S_n^\omega[f]\bigr)
=
\deg \operatorname{div}_{\infty}\bigl(S_n^\omega[f]\bigr)
=
4n.
\]
Thus \(S_n^\omega[f]\) has \(4n\) zeros on \(E\), counted with
multiplicity.
\end{cor}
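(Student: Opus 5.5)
The plan is to reduce everything to the Weierstrass normal form of Theorem~\ref{thm:double-cover} and then read off poles from Corollary~\ref{cor:descent-degree}. First I will fix the frame. Since $S_n^{c\omega}[f]=c^{-n}S_n^\omega[f]$ by \eqref{eq:scaling-Sn}, I choose the lattice uniformization $E\simeq\mathbb C/\Omega_\omega$ whose coordinate $z$ satisfies $dz=\omega$. Theorem~\ref{thm:double-cover} produces some lattice; I will rescale it so that its coordinate differential is exactly $\omega$. This is harmless, because the normal form $f=M(\wp(z-Q))$ survives rescaling $z\mapsto cz$: we have $\wp_{c\Omega}(cz)=c^{-2}\wp_\Omega(z)$, and the factor $c^{-2}$ is absorbed into $M$. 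Then $S_n^\omega[f](z)=S_n[\wp](z-Q)$ holds for all $n\ge2$.

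Next I will identify the ramification points. From the proof of Theorem~\ref{thm:double-cover}, they are the fixed points of $\iota(P)=2Q-P$, namely the points $P$ with $2(P-Q)=0$. So $R_f=Q+E[2]$, which consists of four points. The poles of $z\mapsto S_n[\wp](z-Q)$ are the points where $2(z-Q)\in\Omega$. By Corollary~\ref{cor:descent-degree}, the function $s_n(u)$ has its only pole at $u=0$. Pulling back through $u=2(z-Q)$, the poles on $E$ are exactly $Q+E[2]=\operatorname{supp}R_f$.

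For the principal part, I write $P\in Q+E[2]$ as $P=Q+T$. Then $S_n^\omega[f](P+w)=S_n[\wp](T+w)=S_n[\wp](w)$, using the invariance \eqref{eq:E2-invariance}. By Theorem~\ref{thm:closed-formula} this equals $G_n-\wp^{(n-2)}(2w)/(n-1)!$. The Laurent expansion $\wp^{(n-2)}(u)=(-1)^{n}(n-1)!\,u^{-n}+O(u^{2-n})$ then gives $-(-1)^n(2w)^{-n}+O(1)$, which is $(-1)^{n-1}2^{-n}w^{-n}+O(1)$. There is a subtlety here. The lower-order terms of $\wp^{(n-2)}(u)$ are regular, because $\wp(u)-u^{-2}$ is holomorphic at $0$, so the error really is $O(1)$ and not merely $O(w^{2-n})$. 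The case $n=2$ is handled the same way with $-\wp(2w)=-\tfrac14w^{-2}+O(w^2)$.

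Finally, I will count. Each of the four ramification points is a pole of exact order $n$, so $\operatorname{div}_\infty=nR_f$ and the pole divisor has degree $4n$. A nonconstant elliptic function has as many zeros as poles, which gives the zero count $4n$. The main obstacle I expect is only the bookkeeping in the first step: checking that the rescaling of the lattice is compatible with the normal form and with the chosen $\omega$. Everything after that is a direct substitution.
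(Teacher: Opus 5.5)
Your proposal is correct and follows essentially the same route as the paper. You reduce to $S_n[\wp](z-Q)$ via Theorem~\ref{thm:double-cover}, locate the poles at $Q+E[2]$, read off the principal part from Theorem~\ref{thm:closed-formula} and the Laurent expansion of $\wp^{(n-2)}$, and then count zeros against poles. The differences are cosmetic: you invoke the $E[2]$-invariance of Theorem~\ref{thm:two-torsion-symmetry} where the paper uses lattice periodicity of $\wp^{(n-2)}(2(z-Q))$ directly, and you make explicit the rescaling that matches the lattice coordinate to $\omega$, which the paper leaves implicit.
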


\begin{proof}
By Theorem~\ref{thm:double-cover}, after choosing a lattice
uniformization \(E\simeq\mathbb C/\Omega\), there exist \(Q\in E\) and
\(M\in\operatorname{PGL}_2(\mathbb C)\) such that
\[
f(z)=M\bigl(\wp(z-Q)\bigr).
\]
Projective invariance of the Aharonov invariants therefore gives
\[
S_n^\omega[f](z)=S_n[\wp](z-Q).
\]

The ramification points of the Weierstrass double cover are precisely
the points of \(E[2]\). Hence the ramification points of \(f\) are the
points \(P\in E\) satisfying
\[
P-Q\in E[2].
\]
In particular,
\[
2(P-Q)\in\Omega.
\]

Fix such a ramification point \(P\), and write \(z=P+w\), where \(w\)
is the local coordinate normalized by \(dw=\omega\). Then
\[
2(z-Q)=2(P-Q)+2w.
\]
Since \(2(P-Q)\) is a lattice point and the Weierstrass function and
all of its derivatives are \(\Omega\)-periodic, we obtain
\[
\wp^{(n-2)}\bigl(2(z-Q)\bigr)
=
\wp^{(n-2)}(2w).
\]

For \(n\geq3\), Theorem~\ref{thm:closed-formula} gives
\[
S_n^\omega[f](P+w)
=
G_n(\Omega)
-
\frac{1}{(n-1)!}\wp^{(n-2)}(2w).
\]
Using the Laurent expansion
\[
\wp^{(n-2)}(u)
=
(-1)^{n-2}(n-1)!u^{-n}
+
O(1),
\]
we find
\[
S_n^\omega[f](P+w)
=
\frac{(-1)^{n-1}}{2^n w^n}
+
O(1).
\]
For \(n=2\), the same conclusion follows directly from
\[
S_2[\wp](z)=-\wp(2z)
\]
and the expansion
\[
\wp(2w)=\frac{1}{4w^2}+O(w^2).
\]
Thus \eqref{eq:principal-part} holds for every \(n\geq2\).

The coefficient of \(w^{-n}\) in \eqref{eq:principal-part} is nonzero.
Hence every ramification point is a pole of exact order \(n\).
Conversely, the formulas above show that no other poles occur. Therefore
\[
\operatorname{div}_{\infty}\bigl(S_n^\omega[f]\bigr)=nR_f.
\]

Finally, a degree-two map from an elliptic curve to \(\mathbb P^1\)
has four ramification points, by the Riemann--Hurwitz formula. Hence
\[
\deg(nR_f)=4n.
\]
Since a nonconstant meromorphic function on a compact Riemann surface
has zero and pole divisors of the same degree, \(S_n^\omega[f]\) has
\(4n\) zeros counted with multiplicity.
\end{proof}

\begin{remark}
The divisor statement is independent of the choice of $\omega$. The numerical
coefficient in \eqref{eq:principal-part} records the normalization $dw=\omega$.
Under $\omega\mapsto c\omega$, the local coordinate becomes $cw$ and the
coefficient transforms with weight $n$.
\end{remark}

\section{Level structures from torsion specialization}\label{sec:torsion}

For $n\geq3$ define
\begin{equation}\label{eq:phi-n}
\phi_n(u;\Omega)
=
\sum_{\lambda\in\Omega}(u+\lambda)^{-n}.
\end{equation}
The series converges absolutely away from the lattice and defines a meromorphic elliptic function of $u$, with poles at the lattice points. In the standard terminology it is the holomorphic member of the classical Eisenstein--Kronecker system; see \cite{weil,sprang}.

\begin{thm}\label{thm:EK-identification}
For every $n\geq3$,
\begin{equation}\label{eq:EK-identification}
S_n[\wp](z)-G_n(\Omega)
=
(-1)^{n-1}\phi_n(2z;\Omega).
\end{equation}
Equivalently, the generating series formed by the projective invariants
$S_n$, $n\geq2$, is the logarithmic derivative of the Kronecker function as
in \eqref{eq:Kronecker-generating}.
\end{thm}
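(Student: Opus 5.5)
The plan is to reduce \eqref{eq:EK-identification} to Theorem~\ref{thm:closed-formula} by expressing the derivatives of $\wp$ through the Eisenstein--Kronecker series $\phi_n$ of \eqref{eq:phi-n}. By Theorem~\ref{thm:closed-formula}, for $n\geq3$ we have
\[
S_n[\wp](z)-G_n(\Omega)=-\frac{1}{(n-1)!}\wp^{(n-2)}(2z).
\]
It therefore suffices to prove the lattice-sum identity
\[
\wp^{(n-2)}(u)=(-1)^{n}(n-1)!\,\phi_n(u;\Omega),\qquad n\geq3,
\]
and then substitute $u=2z$. This gives $-\frac{1}{(n-1)!}\wp^{(n-2)}(2z)=(-1)^{n-1}\phi_n(2z;\Omega)$, which is the claim.

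To prove the lattice-sum identity, start from the classical series
\[
\wp(u)=u^{-2}+\sum_{\lambda\neq0}\bigl((u+\lambda)^{-2}-\lambda^{-2}\bigr),
\]
which converges locally uniformly on $\mathbb C\setminus\Omega$. Weierstrass' theorem allows termwise differentiation. One derivative already kills the constants $\lambda^{-2}$, and gives
\[
\wp'(u)=-2\sum_{\lambda}(u+\lambda)^{-3}=-2\phi_3(u).
\]
From this point on the sums converge absolutely and uniformly on compacta. Differentiating termwise $n-3$ more times gives
\[
\wp^{(n-2)}(u)=(-1)^{n}(n-1)!\sum_\lambda(u+\lambda)^{-n},
\]
since $\frac{d^{k}}{du^{k}}(u+\lambda)^{-2}=(-1)^k(k+1)!\,(u+\lambda)^{-k-2}$. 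This is the required identity with $k=n-2$.

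As a cross-check that avoids the series for $\wp$, note that both $\phi_n$ and $\wp^{(n-2)}$ are elliptic with poles only on $\Omega$. Their principal parts at $0$ agree, namely $(-1)^n(n-1)!\,u^{-n}$. Both are odd or even according to the parity of $n$. For $n\ge 4$, their difference is constant. Comparing the constant terms at $u=0$ gives $\sum_{\lambda\neq0}\lambda^{-n}$ against the $u^0$ coefficient of $\wp^{(n-2)}$. The latter is $(n-2)!$ times the $u^{n-2}$ coefficient of $\wp$, which is $(n-1)G_n$ by \eqref{eq:zeta-Laurent}. So the constants match after the factor $(-1)^n(n-1)!$, and the termwise argument is the cleaner route.

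For the equivalent formulation, insert the identity into \eqref{eq:Kronecker-generating}:
\[
-\partial_t\log\Phi(2z,t)=\frac1t+\wp(2z)t-\sum_{n\ge3}\bigl(G_n+(-1)^{n-1}\phi_n(2z)\bigr)t^{n-1}.
\]
This is exactly the Taylor expansion of $\zeta(t)-\zeta(2z+t)+\zeta(2z)$ in $t$, rewritten in terms of $\phi_n$.

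The only real obstacle is the bookkeeping of signs and factorials: tracking $(-1)^{n-2}$ from differentiation against the $(-1)^{n-1}$ in the statement. Justifying the termwise differentiation is routine because of the uniform convergence.
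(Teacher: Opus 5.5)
Your proposal is correct and follows the paper's proof: termwise differentiation of the Weierstrass series gives $\wp^{(n-2)}(u)=(-1)^{n}(n-1)!\,\phi_n(u;\Omega)$. This is then substituted at $u=2z$ into Theorem~\ref{thm:closed-formula}, and the generating-series formulation is read off from \eqref{eq:Kronecker-generating}. Your Liouville-type cross-check is not needed; it also works for every $n\geq3$, not only $n\geq4$, since for odd $n$ oddness forces the constant to vanish.
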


\begin{proof}
For \(n\geq3\), differentiating \(n-2\) times with
respect to \(u\) the expression
\[
\wp(u)
=
\frac{1}{u^2}
+
\sum_{\lambda\in\Omega\setminus\{0\}}
\left(
\frac{1}{(u+\lambda)^2}-\frac{1}{\lambda^2}
\right),
\]
we obtain
\[
\wp^{(n-2)}(u)
=
(-1)^{n-2}(n-1)!
\sum_{\lambda\in\Omega}(u+\lambda)^{-n}.
\]
By the definition of \(\phi_n\), this is
\[
\wp^{(n-2)}(u)
=
(-1)^{n-2}(n-1)!\phi_n(u;\Omega).
\]

Now Theorem~\ref{thm:closed-formula} gives
\[
S_n[\wp](z)
=
G_n(\Omega)
-
\frac{1}{(n-1)!}\wp^{(n-2)}(2z).
\]
Substituting the preceding identity with \(u=2z\), we find
\begin{align*}
S_n[\wp](z)-G_n(\Omega)
&=
-\frac{1}{(n-1)!}
   \wp^{(n-2)}(2z)\\
&=
(-1)^{n-1}\phi_n(2z;\Omega),
\end{align*}
which proves \eqref{eq:EK-identification}.

The equivalent generating-series formulation is precisely
\eqref{eq:Kronecker-generating}.
\end{proof}

Now put $L_\tau=\mathbb Z\tau+\mathbb Z$. For the column vector
$a=(a_1,a_2)^t\in\mathbb Q^2$ write
\[
\tau_a=a_1\tau+a_2.
\]
For $a\notin\mathbb Z^2$ and $n\geq3$ define
\begin{equation}\label{eq:Ena}
E_{n,a}(\tau)=\phi_n(\tau_a;L_\tau).
\end{equation}
Since \(a\in\mathbb Q^2\), the class of
\(\tau_a=a_1\tau+a_2\) in \(\mathbb C/L_\tau\) is a torsion point.
Thus $E_{n,a}(\tau)$
is the specialization of the holomorphic Eisenstein--Kronecker
function \(\phi_n\) at a torsion point of the elliptic curve
\(\mathbb C/L_\tau\); see, for example, \cite{weil}.
If $\ell(a)$ denotes the least positive integer $N$ such that $Na\in\mathbb Z^2$, then $N$ is the exact level of $a$. Equivalently, writing
\[
a\equiv \left(\frac rN,\frac sN\right)^t \pmod{\mathbb Z^2},
\]
the condition $\ell(a)=N$ is equivalent to $\gcd(r,s,N)=1$. Thus exact-level classes are represented by primitive residue vectors modulo $N$, the standard data indexing the cusps of $\Gamma(N)$, up to the usual sign identification of a cusp; see \cite{schoeneberg}.

The modular transformation law is
\begin{equation}\label{eq:Ena-transform}
E_{n,a}(\gamma\tau)(C\tau+D)^{-n}
=
E_{n,\gamma^t a}(\tau),
\quad
\gamma=\begin{pmatrix}A&B\\ C&D\end{pmatrix}\in\operatorname{SL}_2(\mathbb Z),
\end{equation}
 see
\cite{schoeneberg}.

\begin{thm}\label{thm:torsion-specialization}
Let $n\geq3$ and $a\in\mathbb Q^2\setminus\mathbb Z^2$. Then
\begin{equation}\label{eq:torsion-specialization}
S_n[\wp_\tau]\left(\frac{\tau_a}{2}\right)-G_n(L_\tau)
=
(-1)^{n-1}E_{n,a}(\tau).
\end{equation}
For every $N>1$, the functions
\begin{equation}\label{eq:eis-span}
\left\{
S_n[\wp_\tau]\left(\frac{\tau_a}{2}\right)-G_n(L_\tau)
:\ \ell(a)=N
\right\}
\end{equation}
span the Eisenstein subspace of weight $n$ and trivial character for
$\Gamma(N)$, namely the Eisenstein subspace associated with all cusps of
$\Gamma(N)$.
\end{thm}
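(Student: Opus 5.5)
The identity \eqref{eq:torsion-specialization} follows from Theorem~\ref{thm:EK-identification}. Apply \eqref{eq:EK-identification} with $\Omega=L_\tau$ and $z=\tau_a/2$, so that $2z=\tau_a$. Since $a\notin\mathbb Z^2$, the point $\tau_a$ is not in $L_\tau$. Hence $\phi_n(\tau_a;L_\tau)$ is finite and equals $E_{n,a}(\tau)$ by \eqref{eq:Ena}. The point $z=\tau_a/2$ could be a two-torsion translate of a pole of $S_n[\wp_\tau]$. That happens only when $2z\in L_\tau$, which is excluded, so both sides are finite.

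For the spanning statement, two facts are needed. First, for fixed $N$ the functions $E_{n,a}$ with $\ell(a)=N$ lie in the Eisenstein space $\mathcal E_n(\Gamma(N))$. Second, they span it. For membership I would use the transformation law \eqref{eq:Ena-transform}. For $\gamma\in\Gamma(N)$ we have $\gamma^t a\equiv a\pmod{\mathbb Z^2}$, and $\phi_n(u;L)$ depends only on $u$ modulo $L$. Hence $E_{n,a}$ is invariant in weight $n$ under $\Gamma(N)$.

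Holomorphy at the cusps, and the fact that these are Eisenstein series rather than cusp forms, come from the standard Lipschitz expansion. Write
\[
\phi_n(\tau_a;L_\tau)=\sum_{m}\sum_{k}(a_1\tau+a_2+m\tau+k)^{-n}.
\]
Summing over $k$ gives $q$-expansions in $q^{1/N}$ with constant term $\sum_{m\equiv -a_1\ (1)}(m+a_1)^{-n}$-type contributions, a Hurwitz zeta value. Comparing with Hecke's classical series
\[
G_{n,a}(\tau)=\sum_{(m,k)\equiv a\ (\mathrm{mod}\ 1)}(m\tau+k)^{-n}
\]
shows that $E_{n,a}$ is a partial Eisenstein series of Hecke type. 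Indeed, the substitution $(m,k)\mapsto(m+a_1,k+a_2)$ identifies $E_{n,a}(\tau)$ with $N^n\sum_{(M,K)\equiv (r,s)\ (N)}(M\tau+K)^{-n}$ for $a=(r/N,s/N)$. This is exactly Hecke's $G_n(\tau;(r,s),N)$, up to the factor $N^{n}$.

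Spanning then reduces to the classical theorem of Hecke, as in \cite{schoeneberg}. For $n\geq3$, the series $G_n(\tau;(r,s),N)$ with $(r,s)$ primitive modulo $N$ span $\mathcal E_n(\Gamma(N))$. The relation $G_n(\tau;(-r,-s),N)=(-1)^nG_n(\tau;(r,s),N)$ accounts for the sign identification of cusps. The argument is that the constant terms at the cusps, computed through \eqref{eq:Ena-transform}, give a nonsingular matrix against the cusp-value functionals, and the dimension of $\mathcal E_n(\Gamma(N))$ equals the number of cusps (the regular ones in odd weight). Since the sign $(-1)^{n-1}$ and the translation by $G_n(L_\tau)$ are already absorbed in \eqref{eq:torsion-specialization}, the set \eqref{eq:eis-span} is the set $\{\pm E_{n,a}\}$ and spans the same space.

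The main obstacle is the spanning step for odd $n$ when $-1\in\Gamma(N)$, that is $N\le2$. There the space can vanish: for $N=2$ and odd $n$, $E_{n,a}=(-1)^nE_{n,-a}=-E_{n,a}$ because $-a\equiv a$. In that case the statement holds trivially, since both sides are zero. Otherwise I would cite Hecke's theorem rather than reprove the nondegeneracy of constant terms.
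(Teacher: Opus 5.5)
Your proposal is correct and follows the paper's proof. The identity is Theorem~\ref{thm:EK-identification} evaluated at $z=\tau_a/2$, and the spanning is the classical Hecke theorem; the paper cites Schoeneberg and Pasol, and it handles odd $n$ with $N=2$ separately, exactly as you do. Your explicit comparison $E_{n,a}=N^nG_n(\tau;(r,s),N)$ just makes precise the paper's remark that the two normalizations differ by a scalar that does not affect the span. One small slip: the constant term at $\infty$ is $\sum_k (k+a_2)^{-n}$, which occurs only when $a_1\in\mathbb Z$, and not a sum over $m$ as you wrote.
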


\begin{proof}
Equation \eqref{eq:torsion-specialization} is Theorem~\ref{thm:EK-identification} evaluated at $z=\tau_a/2$. The spanning assertion is the classical theorem that the $E_{n,a}$ of exact level $N$ span the Eisenstein subspace of weight $n$ for $\Gamma(N)$; see \cite{schoeneberg} and \cite[Theorem~2.1]{pasol}.
The normalization in \eqref{eq:Ena} agrees with those references up to a
nonzero scalar depending only on $n$, so it does not affect the resulting span. Under the description above, exact level $N$ corresponds to the primitive residue vectors indexing the cusps of $\Gamma(N)$. For odd $n$ and $N=2$, both sides of the
spanning assertion are zero: one has $E_{n,-a}=-E_{n,a}$ but $-a=a$ modulo
$\mathbb Z^2$, while $-I\in\Gamma(2)$ forces every modular form of odd weight
and trivial character to vanish.
\end{proof}

Hence torsion specializations of the order-\(n\) higher Schwarzian recover the Eisenstein subspace at every principal congruence level.

The logarithmic derivative of the Kronecker function and its expansion in
Eisenstein--Kronecker functions are classical; see, for example,
Weil~\cite{weil}. Theorem~\ref{thm:EK-identification} and
Equation~\eqref{eq:Kronecker-generating} identify this classical generating
series with the full sequence of Aharonov invariants associated with the
elliptic double cover.

The two-variable modular law follows directly from the closed formula.

\begin{prop}\label{prop:modular-two-variable}
Let $n\geq2$ and $\gamma=\left(\begin{smallmatrix}a&b\\c&d\end{smallmatrix}\right)\in\operatorname{SL}_2(\mathbb Z)$. Then
\begin{equation}\label{eq:modular-two-variable}
S_n[\wp_{\gamma\tau}]\left(\frac{z}{c\tau+d}\right)
=
(c\tau+d)^nS_n[\wp_\tau](z).
\end{equation}
Moreover,
\[
S_n[\wp_\tau]\left(z+\frac{\lambda}{2}\right)
=S_n[\wp_\tau](z),
\quad \lambda\in L_\tau.
\]
\end{prop}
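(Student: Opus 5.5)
We will deduce both identities from the closed formula in Theorem~\ref{thm:closed-formula} together with the homogeneity of the Weierstrass function and the Eisenstein series under rescaling of the lattice. The point is that $\wp_\tau$ denotes the Weierstrass function of $L_\tau$, and for $\gamma\in\operatorname{SL}_2(\mathbb Z)$ one has
\[
L_{\gamma\tau}=\mathbb Z\frac{a\tau+b}{c\tau+d}+\mathbb Z=(c\tau+d)^{-1}\bigl(\mathbb Z(a\tau+b)+\mathbb Z(c\tau+d)\bigr)=(c\tau+d)^{-1}L_\tau,
\]
since $\gamma$ is unimodular and so permutes bases of $L_\tau$. So the first step is to record this lattice identity, with $\mu=(c\tau+d)^{-1}$.

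Next we use homogeneity. For any lattice $\Omega$ and $\mu\in\mathbb C^\times$, we have $\wp(\mu u;\mu\Omega)=\mu^{-2}\wp(u;\Omega)$, and differentiating $k$ times in $u$ gives $\wp^{(k)}(\mu u;\mu\Omega)=\mu^{-2-k}\wp^{(k)}(u;\Omega)$. Likewise $G_n(\mu\Omega)=\mu^{-n}G_n(\Omega)$ directly from \eqref{eq:Gn}.

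We then substitute into the formulas of Theorem~\ref{thm:closed-formula}, taking $\Omega=L_\tau$ and evaluating at $\mu z$. For $n=2$,
\[
S_2[\wp_{\gamma\tau}](\mu z)=-\wp(2\mu z;\mu L_\tau)=-\mu^{-2}\wp(2z;L_\tau)=\mu^{-2}S_2[\wp_\tau](z).
\]
For $n\geq3$, both terms $G_n$ and $\wp^{(n-2)}(2\cdot)/(n-1)!$ scale by $\mu^{-n}$, since $2+(n-2)=n$. With $\mu^{-1}=c\tau+d$, this gives \eqref{eq:modular-two-variable}.

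Alternatively, and more conceptually, the whole generating series \eqref{eq:ahar-expansion} for $f(u)=\wp(u;\mu\Omega)$ at $u=\mu z$, with increment $\mu t$, equals $\mu^{-1}$ times the series for $\wp(\cdot;\Omega)$ at $z$ with increment $t$. Comparing coefficients of $t^{n-1}$ gives the weight $n$, which is consistent with \eqref{eq:scaling-Sn}.

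The second identity is Theorem~\ref{thm:two-torsion-symmetry}: the class of $\lambda/2$ with $\lambda\in L_\tau$ lies in $E[2]$. It can also be read off directly from Theorem~\ref{thm:closed-formula}, since $2(z+\lambda/2)=2z+\lambda$ and $\wp^{(k)}$ is $L_\tau$-periodic.

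There is no real obstacle here. The only point needing care is the lattice identity $L_{\gamma\tau}=(c\tau+d)^{-1}L_\tau$, which requires $\gamma\in\operatorname{SL}_2(\mathbb Z)$; the remaining steps are routine bookkeeping of the weight, where one should confirm that the Eisenstein constant and the derivative term carry the same weight $n$.
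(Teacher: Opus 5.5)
Your proposal is correct and follows essentially the same route as the paper: the lattice identity $L_{\gamma\tau}=(c\tau+d)^{-1}L_\tau$ and the scaling laws for $G_n$ and $\wp^{(r)}$ are substituted into the closed formula of Theorem~\ref{thm:closed-formula}, and Theorem~\ref{thm:two-torsion-symmetry} gives the half-period invariance. Your explicit $n=2$ check and the generating-series rescaling argument are harmless extra confirmations of the same weight-$n$ scaling.
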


\begin{proof}
The lattices satisfy $L_{\gamma\tau}=(c\tau+d)^{-1}L_\tau$ since $\gamma\in \operatorname{SL}_2(\mathbb Z) $. If
$\Lambda'=\alpha\Lambda$, then direct substitution in the defining series
gives
\[
G_n(\Lambda')=\alpha^{-n}G_n(\Lambda),\quad
\wp_{\Lambda'}^{(r)}(\alpha u)=\alpha^{-r-2}\wp_\Lambda^{(r)}(u).
\]
Take $\alpha=(c\tau+d)^{-1}$ and $r=n-2$, and substitute these two scaling
relations into Theorem~\ref{thm:closed-formula}. Both terms acquire the factor
$(c\tau+d)^n$, proving \eqref{eq:modular-two-variable}. Finally,
$\lambda/2$ represents a point of $E[2]$, so the second assertion follows from
Theorem~\ref{thm:two-torsion-symmetry}.
\end{proof}

\section{Isogeny functoriality and trace identities}\label{sec:isogenies}

For $n\geq3$ set
\begin{equation}\label{eq:Tn}
T_n^\Omega(z)
=
S_n[\wp_\Omega](z)-G_n(\Omega)
=
(-1)^{n-1}\phi_n(2z;\Omega).
\end{equation}

\begin{thm}\label{thm:distribution}
Let $N\geq1$, let $d=(N,2)$, and let $n\geq3$. Then
\begin{equation}\label{eq:distribution}
\sum_{P\in E[N]}T_n^\Omega(z+P)
=
N^n d^{2-n}
T_n^\Omega\left(\frac Nd z\right).
\end{equation}
In particular, if $N$ is odd,
\begin{equation}\label{eq:distribution-odd}
\sum_{P\in E[N]}T_n^\Omega(z+P)
=
N^nT_n^\Omega(Nz).
\end{equation}
\end{thm}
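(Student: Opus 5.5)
The plan is to reduce \eqref{eq:distribution} to the classical distribution relation for the Eisenstein--Kronecker function $\phi_n$ of \eqref{eq:phi-n}, and then account for how multiplication by two acts on $E[N]$. By \eqref{eq:Tn}, both sides of \eqref{eq:distribution} carry the same sign $(-1)^{n-1}$. So it suffices to prove
\[
\sum_{P\in E[N]}\phi_n(2z+2P;\Omega)=N^nd^{2-n}\,\phi_n\!\left(\tfrac{2N}{d}z;\Omega\right).
\]
Each summand depends only on the class of $P$ modulo $\Omega$, because $\phi_n$ is $\Omega$-periodic.

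The first step is the distribution relation: for every integer $M\geq1$,
\[
\sum_{Q\in E[M]}\phi_n(u+Q;\Omega)=M^n\phi_n(Mu;\Omega).
\]
To prove it, represent $E[M]$ by $\tfrac1M\Omega/\Omega$. The double sum $\sum_Q\sum_{\lambda\in\Omega}(u+Q+\lambda)^{-n}$ is then a single sum over $\mu\in\tfrac1M\Omega$ of $(u+\mu)^{-n}$. Absolute convergence for $n\geq3$, away from the poles, justifies this regrouping. Writing $\mu=\lambda'/M$ with $\lambda'\in\Omega$ gives $(u+\lambda'/M)^{-n}=M^n(Mu+\lambda')^{-n}$, which yields $M^n\phi_n(Mu;\Omega)$. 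The identity then extends to all $u$ as an identity of meromorphic functions.

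The second step handles the factor $2$ in the argument, and this bookkeeping is the only point needing care. I will study the map $[2]:E[N]\to E[N]$.

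If $N$ is odd, $[2]$ is a bijection of $E[N]$. Hence $\sum_P\phi_n(2z+2P)=\sum_{Q\in E[N]}\phi_n(2z+Q)=N^n\phi_n(2Nz)$. This equals $N^n(-1)^{n-1}T_n^\Omega(Nz)$, which is \eqref{eq:distribution-odd} and the case $d=1$.

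If $N$ is even, the image of $[2]$ is $E[N/2]$ and its kernel is $E[2]$, of order $4$. Each $Q\in E[N/2]$ is therefore hit exactly four times. Applying the first step with $M=N/2$ gives
\[
\sum_{P\in E[N]}\phi_n(2z+2P)=4\sum_{Q\in E[N/2]}\phi_n(2z+Q)=4\left(\tfrac N2\right)^n\phi_n(Nz).
\]
This equals $N^n2^{2-n}\phi_n\bigl(2\cdot\tfrac N2 z\bigr)$. Multiplying by $(-1)^{n-1}$ and using \eqref{eq:Tn} gives \eqref{eq:distribution} with $d=2$.
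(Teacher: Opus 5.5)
Your proposal is correct and follows essentially the same route as the paper: prove the distribution relation for $\phi_n$ by regrouping the absolutely convergent lattice sum over $M^{-1}\Omega$, then use that $[2]$ on $E[N]$ has kernel $E[d]$ and image $E[N/d]$, so each image point is hit $d^2$ times. The only difference is that you treat $N$ odd and $N$ even as separate cases, whereas the paper handles both at once with $d=(N,2)$.
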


\begin{proof}
Recall that, over \(\mathbb C\),
\[
E[N]\simeq (\mathbb Z/N\mathbb Z)^2;
\]
see, for example, \cite[Ch.~II, \S1]{lang}. Let \(d=(N,2)\).
It follows that multiplication by \(2\) on \(E[N]\) has kernel
\[
\ker([2]\vert_{E[N]})=E[d],
\]
which has order \(d^2\), and its image is \(E[N/d]\). Thus every point of
\(E[N/d]\) has exactly \(d^2\) preimages in \(E[N]\) under multiplication
by \(2\).

Using \eqref{eq:Tn}, we therefore obtain
\begin{align*}
\sum_{P\in E[N]}T_n^\Omega(z+P)
&=
(-1)^{n-1}
\sum_{P\in E[N]}
\phi_n(2z+2P;\Omega)\\
&=
(-1)^{n-1}d^2
\sum_{Q\in E[N/d]}
\phi_n(2z+Q;\Omega).
\end{align*}

We now use the classical distribution relation for the
Eisenstein--Kronecker functions
\begin{equation}\label{eq:phi-distribution}
\sum_{Q\in E[M]}
\phi_n(u+Q;\Omega)
=
M^n\phi_n(Mu;\Omega),
\quad M\geq1.
\end{equation}
See \cite{weil}; more generally, distribution relations for the
Kronecker section and Eisenstein--Kronecker series are treated in
\cite[Appendix~A]{sprang}.

The same identity follows directly from the lattice definition. Choose representatives of
\[
E[M]\simeq M^{-1}\Omega/\Omega.
\]
Since \(n\geq3\), the defining series is absolutely convergent, and hence
we may rearrange it:
\begin{align*}
\sum_{Q\in E[M]}\phi_n(u+Q;\Omega)
&=
\sum_{Q\in M^{-1}\Omega/\Omega}
\sum_{\lambda\in\Omega}
(u+Q+\lambda)^{-n}\\
&=
\sum_{\mu\in M^{-1}\Omega}
(u+\mu)^{-n}\\
&=
M^n
\sum_{\lambda\in\Omega}
(Mu+\lambda)^{-n}\\
&=
M^n\phi_n(Mu;\Omega).
\end{align*}

Taking \(M=N/d\) and \(u=2z\) in
\eqref{eq:phi-distribution} gives
\begin{align*}
\sum_{P\in E[N]}T_n^\Omega(z+P)
&=
(-1)^{n-1}d^2
\left(\frac{N}{d}\right)^n
\phi_n\left(\frac{2N}{d}z;\Omega\right)\\
&=
N^n d^{\,2-n}
T_n^\Omega\left(\frac{N}{d}z\right),
\end{align*}
which proves \eqref{eq:distribution}.

If \(N\) is odd, then \(d=1\), and the formula reduces to
\[
\sum_{P\in E[N]}T_n^\Omega(z+P)
=
N^nT_n^\Omega(Nz),
\]
which is \eqref{eq:distribution-odd}.
\end{proof}

The same argument gives a trace identity for odd-degree isogenies.

\begin{thm}\label{thm:isogeny-trace}
Let $\Omega\subset\Omega'$ be lattices of odd index $D$, and let
\[
\varphi:\mathbb C/\Omega\longrightarrow\mathbb C/\Omega'
\]
be the induced isogeny. For $n\geq3$,
\begin{equation}\label{eq:isogeny-trace}
\sum_{P\in\ker\varphi}T_n^\Omega(z+P)
=
T_n^{\Omega'}(\varphi(z)).
\end{equation}
\end{thm}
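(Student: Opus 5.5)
The plan is to repeat the lattice-rearrangement argument from the proof of Theorem~\ref{thm:distribution}, with the sublattice $M^{-1}\Omega$ replaced by the overlattice $\Omega'$. The odd index takes over the role of the hypothesis $d=1$.

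\emph{Setup.} Under the identification $\varphi(z)=z \bmod \Omega'$, the kernel $\ker\varphi$ is the finite group $\Omega'/\Omega$, of order $D$. By \eqref{eq:Tn}, $T_n^{\Omega'}=(-1)^{n-1}\phi_n(2\,\cdot\,;\Omega')$ is $\Omega'$-periodic. So the right side of \eqref{eq:isogeny-trace} is simply $(-1)^{n-1}\phi_n(2z;\Omega')$ for any lift $z$. Similarly, each summand on the left is independent of the chosen representative of $P$ modulo $\Omega$.

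\emph{Step 1 (doubling on the kernel).} Write the left side, using \eqref{eq:Tn}, as
\[
(-1)^{n-1}\sum_{P\in\Omega'/\Omega}\phi_n(2z+2P;\Omega).
\]
Since $\Omega'/\Omega$ is a finite abelian group of odd order $D$, multiplication by $2$ is an automorphism of it: its kernel consists of elements of order dividing $2$, and there are none besides $0$. Hence $P\mapsto 2P$ permutes $\ker\varphi$, and the sum equals
\[
(-1)^{n-1}\sum_{Q\in\Omega'/\Omega}\phi_n(2z+Q;\Omega).
\]
This is the one place where the odd-degree hypothesis enters. It is also the only step needing care, because for even $D$ the doubling map collapses the kernel and produces the correction factor seen in Theorem~\ref{thm:distribution}.

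\emph{Step 2 (rearrangement).} Choose representatives $Q$ of $\Omega'/\Omega$. Every $\mu\in\Omega'$ is uniquely $Q+\lambda$ with $\lambda\in\Omega$. Since $n\geq3$, the series defining $\phi_n$ converges absolutely, so we may merge the double sum:
\[
\sum_{Q}\sum_{\lambda\in\Omega}(2z+Q+\lambda)^{-n}=\sum_{\mu\in\Omega'}(2z+\mu)^{-n}=\phi_n(2z;\Omega').
\]
Multiplying by $(-1)^{n-1}$ gives $T_n^{\Omega'}(z)=T_n^{\Omega'}(\varphi(z))$, which is \eqref{eq:isogeny-trace}.

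Unlike \eqref{eq:phi-distribution}, no power of the degree appears. The reason is that $\Omega'$ is used as it stands, rather than rescaled to $\Omega$ as was done with $M^{-1}\Omega$. The constant terms $G_n$ play no role, because $T_n$ already subtracts them on each side with the appropriate lattice.
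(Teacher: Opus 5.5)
Your proof is correct and is essentially the paper's argument: odd order makes doubling a permutation of $\ker\varphi=\Omega'/\Omega$, and absolute convergence for $n\geq3$ lets the cosets $Q+\Omega$ be merged into $\phi_n(2z;\Omega')$. Your check that both sides are well defined modulo $\Omega$ and $\Omega'$ is a small addition that the paper leaves implicit. The even-degree correction you allude to is the factor $h$ of Proposition~\ref{prop:isogeny-even}, not the one in Theorem~\ref{thm:distribution}.
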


\begin{proof}
Since $D$ is odd, multiplication by $2$ permutes $\ker\varphi$. Therefore
\[
\sum_{P\in\ker\varphi}\phi_n(2z+2P;\Omega)
=
\sum_{P\in\ker\varphi}\phi_n(2z+P;\Omega).
\]
The cosets $P+\Omega$ for $P\in\Omega'/\Omega$ partition $\Omega'$. Hence the last sum is $\phi_n(2z;\Omega')$. Multiply by $(-1)^{n-1}$ and use \eqref{eq:Tn}.
\end{proof}

Equivalently, the left side of \eqref{eq:isogeny-trace} is the trace of the
meromorphic function $T_n^\Omega$ along $\varphi$, pulled back to the source,
while the right side is the pullback of $T_n^{\Omega'}$ from the target.

The odd-degree hypothesis in Theorem~\ref{thm:isogeny-trace} is precisely
the condition that multiplication by \(2\) be an automorphism of the kernel.
For an arbitrary finite-index inclusion \(\Omega\subset\Omega'\), set
\[
K=\Omega'/\Omega,\quad
\Omega_2=\Omega+2\Omega',
\quad
h=\#K[2].
\]
Then the image of multiplication by \(2\) on \(K\) is naturally identified
with
\[
2K\simeq \Omega_2/\Omega,
\]
and every element of this image has exactly \(h\) preimages in \(K\).
Thus \(\Omega_2\) records the part of the target lattice obtained after
doubling the elements of the isogeny kernel, while \(h\) measures the
failure of multiplication by \(2\) to be injective on that kernel.
If the isogeny has odd degree, then \(K[2]=0\), so \(h=1\) and
\(\Omega_2=\Omega'\), recovering the situation of
Theorem~\ref{thm:isogeny-trace}.

\begin{prop}\label{prop:isogeny-even}
For an arbitrary inclusion $\Omega\subset\Omega'$ of finite index and
$n\geq3$, one has
\[
\sum_{P\in\Omega'/\Omega}T_n^\Omega(z+P)
=hT_n^{\Omega_2}(z).
\]
Consequently Theorem~\ref{thm:isogeny-trace} is exactly the case in which
multiplication by two is an automorphism of the kernel.
\end{prop}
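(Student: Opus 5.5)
The plan is to follow the proof of Theorem~\ref{thm:isogeny-trace}, replacing the bijection of multiplication by two on the kernel with its counting version. By \eqref{eq:Tn},
\[
\sum_{P\in\Omega'/\Omega}T_n^\Omega(z+P)
=(-1)^{n-1}\sum_{P\in K}\phi_n(2z+2P;\Omega),
\]
where $K=\Omega'/\Omega$. Each summand is well defined on cosets: $\phi_n(\,\cdot\,;\Omega)$ is $\Omega$-periodic, and $2P$ is determined modulo $2\Omega\subset\Omega$.

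Next I would reindex the sum through the homomorphism $[2]:K\to K$. Its kernel is $K[2]$, of order $h$. Its image is $2K=(2\Omega'+\Omega)/\Omega=\Omega_2/\Omega$. Every fibre is a coset of $K[2]$, so each element of $2K$ is hit exactly $h$ times. The sum therefore becomes
\[
(-1)^{n-1}h\sum_{Q\in\Omega_2/\Omega}\phi_n(2z+Q;\Omega).
\]

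The remaining step is the coset-partition argument already used in Theorem~\ref{thm:isogeny-trace} and in the lattice proof of \eqref{eq:phi-distribution}. Since $n\geq3$, the double series $\sum_{Q}\sum_{\lambda\in\Omega}(2z+Q+\lambda)^{-n}$ converges absolutely. As $Q$ runs over representatives of $\Omega_2/\Omega$, the cosets $Q+\Omega$ partition $\Omega_2$. Rearranging gives $\phi_n(2z;\Omega_2)$, so the total equals
\[
(-1)^{n-1}h\,\phi_n(2z;\Omega_2)=hT_n^{\Omega_2}(z),
\]
again by \eqref{eq:Tn}, now applied to the lattice $\Omega_2$. The right side is a function on $\mathbb C/\Omega_2$ evaluated at $z$ itself. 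This matches the odd case, where $\Omega_2=\Omega'$ and $T_n^{\Omega'}(\varphi(z))$ is just $T_n^{\Omega'}(z)$ in the uniformizing variable.

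For the consequence, I would note three facts. First, $h=1$ exactly when $[2]$ is injective on the finite group $K$, which is equivalent to $[2]$ being an automorphism of $K$. Second, in that case $2K=K$, so $\Omega_2=\Omega'$. Third, $K[2]=0$ precisely when $\#K=D$ is odd, by Cauchy's theorem. Thus the proposition specializes exactly to \eqref{eq:isogeny-trace}.

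The only point needing care is the identification $2K\simeq\Omega_2/\Omega$ together with the uniform fibre count $h$; both are elementary group theory. Convergence is handled by $n\geq3$, exactly as before.
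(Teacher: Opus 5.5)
Your proposal is correct and follows the paper's proof essentially verbatim. You reindex through multiplication by two on $K=\Omega'/\Omega$ (kernel $K[2]$ of order $h$, image $\Omega_2/\Omega$), then use that the cosets $Q+\Omega$ partition $\Omega_2$ to obtain $(-1)^{n-1}h\,\phi_n(2z;\Omega_2)=hT_n^{\Omega_2}(z)$; your remarks on well-definedness modulo $\Omega$ and on $K[2]=0$ exactly when $D$ is odd (Cauchy's theorem) only make explicit what the paper says in the surrounding discussion.
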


\begin{proof}
Multiplication by two on $K=\Omega'/\Omega$ has kernel $K[2]$ of order $h$
and image represented by the quotient lattice $\Omega_2/\Omega$. Hence
\begin{align*}
\sum_{P\in K}T_n^\Omega(z+P)
&=(-1)^{n-1}h
  \sum_{Q\in\Omega_2/\Omega}\phi_n(2z+Q;\Omega)\\
&=(-1)^{n-1}h\phi_n(2z;\Omega_2)
=hT_n^{\Omega_2}(z),
\end{align*}
because the cosets $Q+\Omega$ partition $\Omega_2$.
\end{proof}

Thus, in even degree, the trace factors through the isogeny with target
$\mathbb C/\Omega_2$, and the factor $h$ measures the two-torsion lost under
multiplication by two on the original kernel.

\begin{remark}
The distribution relation for the full Eisenstein--Kronecker system has an algebraic formulation through the Poincar\'e bundle; see \cite{sprang}. Theorems~\ref{thm:distribution} and \ref{thm:isogeny-trace} are the holomorphic specialization obtained after the higher-Schwarzian identification \eqref{eq:EK-identification}.
\end{remark}

\section{The de Rham class and the classical Weierstrass complement}
\label{sec:derham}

For $n\geq3$, Theorem~\ref{thm:closed-formula} gives the exact decomposition
\begin{equation}\label{eq:exact-decomposition}
S_n[\wp](z)
=G_n(\Omega)+\frac{d}{dz}
\left(-\frac{\wp^{(n-3)}(2z)}{2(n-1)!}\right).
\end{equation}
The displayed primitive is elliptic. Hence, for $\lambda\in\Omega$
and a path avoiding the poles,
\begin{equation}\label{eq:period-general}
\int_{z_0}^{z_0+\lambda}S_n[\wp](z)\,dz=G_n(\Omega)\lambda.
\end{equation}
For order two, define the quasi-period $\eta(\lambda)$ by
$\zeta(z+\lambda)-\zeta(z)=\eta(\lambda)$. Then
\begin{equation}\label{eq:order-two-periods}
S_2[\wp](z)\,dz=d\!\left(\tfrac12\zeta(2z)\right),
\qquad
\int_{z_0}^{z_0+\lambda}S_2[\wp](z)\,dz=\eta(\lambda).
\end{equation}
Here the primitive is not elliptic.

Corollary~\ref{cor:poles-ramification} shows that all these differentials
have zero residues. We use the standard realization
\cite{hartshorne-derham}
\[
H^1_{\mathrm{dR}}(E)
\simeq
\frac{\{\text{meromorphic differentials of the second kind on }E\}}
{\{dF:F\in\mathbb C(E)\}}.
\]
In particular, \eqref{eq:exact-decomposition} gives
\begin{equation}\label{eq:cohomology-class}
[S_n[\wp](z)\,dz]=G_n(\Omega)[dz],\qquad n\geq3.
\end{equation}
We use the notation $S_n^\omega[f]$, $\Omega_\omega$, and $\mathcal H_E$
introduced in Section~\ref{sec:preliminaries}.

\begin{thm}\label{thm:intrinsic-class}
Let $E$ be a complex elliptic curve, let $f:E\to\mathbb P^1$ have
degree two, and let $\omega$ be a nonzero invariant differential.
For every $n\geq2$, the tensor
\begin{equation}\label{eq:Xi}
\Xi_n(E)
=[S_n^\omega[f]\,\omega]\otimes\omega^{\otimes(n-1)}
\in H^1_{\mathrm{dR}}(E)\otimes\mathcal H_E^{\otimes(n-1)}
\end{equation}
is independent of $f$ and $\omega$. For $n\geq3$,
\begin{equation}\label{eq:Xi-value}
\Xi_n(E)=G_n(\Omega_\omega)[\omega]\otimes\omega^{\otimes(n-1)}.
\end{equation}
In order two, in the uniformization with $dz=\omega$,
\begin{equation}\label{eq:Xi-two}
\Xi_2(E)=[-\wp(z)\,dz]\otimes\omega.
\end{equation}
\end{thm}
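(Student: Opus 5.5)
First I reduce to the Weierstrass map. Fix $\omega$ and choose the uniformization $E\simeq\mathbb C/\Omega_\omega$ with $dz=\omega$. Theorem~\ref{thm:double-cover} gives $f(z)=M(\wp(z-Q))$ for some $Q\in E$ and $M\in\operatorname{PGL}_2(\mathbb C)$. A priori that theorem supplies its own lattice uniformization. However, any two uniformizations of $E$ with the same fixed point differ by scaling, and projective invariance absorbs the resulting change of the degree-two quotient map. So I will arrange that $\wp$ is the Weierstrass function of $\Omega_\omega$ itself. Then $S_n^\omega[f](z)=S_n[\wp](z-Q)$, and
\[
[S_n^\omega[f]\,\omega]=[t_{-Q}^{*}(S_n[\wp](z)\,dz)].
\]
Translations act trivially on $H^1_{\mathrm{dR}}(E)$. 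Concretely, for a differential of the second kind $\alpha=g(z)\,dz$, the difference $g(z-Q)\,dz-g(z)\,dz$ has all periods zero and no residues, so it is exact. Equivalently, the classes are detected by their periods, and a translated path has the same periods. Hence
\[
[S_n^\omega[f]\,\omega]=[S_n[\wp](z)\,dz].
\]

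For $n\ge3$, equation~\eqref{eq:cohomology-class} gives $[S_n[\wp]\,dz]=G_n(\Omega_\omega)[dz]$, which yields \eqref{eq:Xi-value}. The only input is the elliptic primitive in \eqref{eq:exact-decomposition}.

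For $n=2$, I have $S_2[\wp](z)=-\wp(2z)$, and I need to show $[-\wp(2z)\,dz]=[-\wp(z)\,dz]$. I will compare periods. By \eqref{eq:order-two-periods}, the periods of $-\wp(2z)\,dz$ are $\eta(\lambda)$. The periods of $-\wp(z)\,dz=d\zeta$ are also $\zeta(z+\lambda)-\zeta(z)=\eta(\lambda)$. A second-kind differential with zero periods is $dF$ for an elliptic $F$, so the two classes agree. For an explicit primitive, the duplication formula lets one write $\tfrac12\zeta(2z)-\zeta(z)=\tfrac14\wp''(z)/\wp'(z)$, which is elliptic. This gives \eqref{eq:Xi-two}. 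Independence of $f$ is then automatic in every order, since the right-hand sides involve only $\omega$.

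It remains to check independence of $\omega$. Replace $\omega$ by $c\omega$. By \eqref{eq:scaling-Sn}, $S_n^{c\omega}[f]=c^{-n}S_n^\omega[f]$, so
\[
[S_n^{c\omega}[f]\,c\omega]\otimes(c\omega)^{\otimes(n-1)}=c^{-n}c\,c^{n-1}\,\Xi_n=\Xi_n.
\]

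I expect the main obstacle to be bookkeeping rather than a computation. The de Rham realization as second-kind differentials modulo exact ones must be matched with period detection. Together with the residue-free statement from Corollary~\ref{cor:poles-ramification}, this justifies both the translation invariance and the order-two comparison. Alongside this, one must make sure that the uniformization in Theorem~\ref{thm:double-cover} can be taken to have $dz=\omega$.
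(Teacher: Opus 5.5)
Your proof is correct. Its skeleton matches the paper's: reduce to $\wp$ via Theorem~\ref{thm:double-cover}, use \eqref{eq:cohomology-class} for $n\geq3$, compare $-\wp(2z)\,dz$ with $-\wp(z)\,dz$ in order two, and check the $c$-scaling.

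The difference lies in how exactness is established. The paper writes down an explicit elliptic primitive for every change of representative:
\begin{itemize}
\item for $n\geq3$, the primitive in \eqref{eq:exact-decomposition} shifted by $Q$;
\item for a translation in order two, $\tfrac12(\zeta(2z-2Q)-\zeta(2z))$, as in \eqref{eq:order-two-translation};
\item for passing from $\wp(2z)$ to $\wp(z)$, the primitive $\tfrac12\zeta(2z)-\zeta(z)$.
\end{itemize}
You instead rely on a single principle: a residue-free differential on $E$ with vanishing periods is $dF$ for an elliptic $F$. With it, translation invariance of the class holds uniformly in every order, and independence of $f$ follows automatically.

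Your route is more conceptual. It also spells out that the uniformization in Theorem~\ref{thm:double-cover} can be normalized to $dz=\omega$, a point the paper leaves implicit. The paper's route produces the concrete exact differentials it advertises in the introduction and cites again after \eqref{eq:Xi-two-classical}.

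Your closed form $\tfrac14\wp''/\wp'$ is exactly the paper's $\tfrac12\zeta(2z)-\zeta(z)$, that is, $\tfrac12S_1[\wp]$ by \eqref{eq:S1}, which makes its ellipticity immediate.
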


\begin{proof}
For $n\geq3$, translating the source in
\eqref{eq:exact-decomposition} changes only the elliptic primitive.
The cohomology class is therefore unchanged. Projective postcomposition
also leaves it unchanged, so Theorem~\ref{thm:double-cover} proves
independence of the degree-two map in these orders.

For order two, the difference caused by a translation $Q$ is
\begin{equation}\label{eq:order-two-translation}
\bigl(-\wp(2z-2Q)+\wp(2z)\bigr)\,dz
=d\!\left(\frac{\zeta(2z-2Q)-\zeta(2z)}2\right).
\end{equation}
The primitive is elliptic in $z$, since its two quasi-period increments
cancel. Thus the order-two class is also independent of the projection.
Moreover, $\tfrac12\zeta(2z)-\zeta(z)$ is elliptic, and its derivative is
$-\wp(2z)+\wp(z)$. This proves \eqref{eq:Xi-two}.

Finally, under $\omega\mapsto c\omega$, the differential
$S_n^\omega[f]\omega$ is multiplied by $c^{1-n}$, whereas
$\omega^{\otimes(n-1)}$ is multiplied by $c^{n-1}$. These factors cancel
for every $n\geq2$. Formula \eqref{eq:Xi-value} follows from
\eqref{eq:cohomology-class}.
\end{proof}

The order-two tensor has a classical description. Let
\[
y^2=4x^3-g_2x-g_3,\qquad \omega=\frac{dx}{y},
\]
be the normalized short Weierstrass model determined by $(E,\omega)$,
and put $\eta_\omega=x\omega$. In the uniformization $dz=\omega$ one
has $x=\wp(z;\Omega_\omega)$, so \eqref{eq:Xi-two} becomes
\begin{equation}\label{eq:Xi-two-classical}
\Xi_2(E)=-[\eta_\omega]\otimes\omega.
\end{equation}
The forms $\omega$ and $\eta_\omega$ give the classical Weierstrass
basis of de Rham cohomology. To see directly why the second class lies
outside the Hodge line, choose an oriented period basis
$\lambda_1,\lambda_2$ and write $\eta_i=\eta(\lambda_i)$. The periods
of $-\eta_\omega$ are $\eta_1,\eta_2$, and the Legendre relation
\begin{equation}\label{eq:Legendre}
\eta_1\lambda_2-\eta_2\lambda_1=2\pi i
\end{equation}
\cite{lang,weil} prevents this class from being a multiple of $[\omega]$.
In particular it is nonzero, and the two classes form a basis.

Replacing $\omega$ by $c\omega$ replaces $x$ by $c^{-2}x$ and
$\eta_\omega$ by $c^{-1}\eta_\omega$. Thus
\begin{equation}\label{eq:Weierstrass-splitting}
\mathcal H_E\oplus\mathcal H_E^{-1}
\xrightarrow{\ \sim\ }H^1_{\mathrm{dR}}(E),
\qquad
(a\omega,b\omega^{-1})\longmapsto a[\omega]+b[\eta_\omega]
\end{equation}
is independent of the frame. This is the classical splitting recorded
by Katz \cite[Appendix~A1.2, Eqs.~(A1.2.4)--(A1.2.6)]{katz}; the same
construction gives a splitting of the underlying vector bundles for
smooth elliptic families where $6$ is invertible. No compatibility
with the Gauss--Manin connection is asserted here.
Equation~\eqref{eq:Xi-two-classical} identifies the order-two
higher-Schwarzian tensor with the negative of this classical
complementary tensor. Theorem~\ref{thm:intrinsic-class} realizes it
through any double cover, with the exact change of representative
written in \eqref{eq:order-two-translation}.

For $n\geq3$, \eqref{eq:Xi-value} identifies $\Xi_n$ with the image of
$G_n(\Omega_\omega)\omega^{\otimes n}$ under
\[
\mathcal H_E^{\otimes n}\longrightarrow
H^1_{\mathrm{dR}}(E)\otimes\mathcal H_E^{\otimes(n-1)}.
\]
Thus, in higher orders the cohomology retains exactly the Eisenstein
term.

\begin{cor}\label{cor:exactness-criterion}
For a degree-two map $f:E\to\mathbb P^1$, the differential
$S_2^\omega[f]\omega$ is never exact. For $n\geq3$,
$S_n^\omega[f]\omega$ is exact if and only if
$G_n(\Omega_\omega)=0$. In particular every odd order at least three
is exact, and exactness in an even order at least four depends only
on the elliptic curve.
\end{cor}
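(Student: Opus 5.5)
The plan is to reduce everything to the Weierstrass normal form and then read exactness off the de Rham classes already computed in Theorem~\ref{thm:intrinsic-class}. By Theorem~\ref{thm:double-cover}, in the uniformization with $dz=\omega$ we have $S_n^\omega[f](z)=S_n[\wp](z-Q)$ with $\Omega=\Omega_\omega$. Exactness means the differential is $dF$ for some $F\in\mathbb C(E)$, that is, its class in $H^1_{\mathrm{dR}}(E)$ vanishes. These differentials have zero residues by Corollary~\ref{cor:poles-ramification}, so they are of the second kind and the realization of $H^1_{\mathrm{dR}}$ used in Section~\ref{sec:derham} applies.

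\textbf{Order two.} By \eqref{eq:Xi-two-classical}, the class $[S_2^\omega[f]\omega]$ equals $-[\eta_\omega]$, since $\omega^{\otimes 1}$ is a nonzero tensor factor and independence of $f$ holds. I will show this class is nonzero directly through periods. An exact differential $dF$ with $F$ elliptic has all periods zero. By \eqref{eq:order-two-periods}, the periods of $S_2[\wp](z)\,dz$ are $\eta(\lambda)$, and translating by $Q$ does not change periods, by \eqref{eq:order-two-translation}. The Legendre relation \eqref{eq:Legendre} shows that $\eta_1,\eta_2$ are not both zero. Hence the order-two differential is never exact.

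\textbf{Orders $n\geq3$.} By \eqref{eq:Xi-value}, or equally by \eqref{eq:exact-decomposition} transported by the translation $z\mapsto z-Q$, we have
\[
S_n^\omega[f]\omega=G_n(\Omega_\omega)\,\omega+dF
\]
with $F$ elliptic. The differential is therefore exact if and only if $G_n(\Omega_\omega)\omega$ is exact. If $G_n=0$, exactness is immediate. Conversely, exactness forces all periods of $G_n\omega$ to vanish, and these periods are $G_n\lambda$ by \eqref{eq:period-general}. Taking $\lambda\neq0$ gives $G_n=0$. This uses only the fact that $\omega=dz$ has nonzero periods, i.e.\ $[\omega]\neq0$.

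\textbf{Consequences.} For odd $n$, $G_n(\Omega)=0$ by symmetry of the lattice under $\lambda\mapsto-\lambda$, so every odd order at least three is exact. For even $n\geq4$, the condition $G_n(\Omega_\omega)=0$ does not depend on $f$. Under $\omega\mapsto c\omega$, the lattice scales and $G_n$ is multiplied by $c^{-n}$, so vanishing is independent of the frame as well. Thus the condition depends only on $E$.

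I expect no serious obstacle. The only point needing care is the reduction "exact $\Rightarrow$ periods vanish", applied with paths avoiding the poles. This is legitimate because residues vanish, so the periods depend only on the homology class.
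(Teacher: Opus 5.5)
Your proposal is correct and is essentially the paper's own argument. The Legendre relation rules out the zero class in order two. For $n\geq3$, the decomposition with an elliptic primitive gives sufficiency and the period formula $\int_{z_0}^{z_0+\lambda}=G_n(\Omega_\omega)\lambda$ gives necessity, odd orders follow from $\lambda\mapsto-\lambda$, and your scaling check $G_n(c\Omega)=c^{-n}G_n(\Omega)$ makes explicit the frame independence that the paper takes from Theorem~\ref{thm:intrinsic-class}.
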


\begin{proof}
For order two, the Legendre relation \eqref{eq:Legendre} excludes the
zero class. For $n\geq3$, \eqref{eq:exact-decomposition} proves
sufficiency. Conversely, exactness forces all periods to vanish,
and \eqref{eq:period-general} applied to a nonzero lattice period
gives $G_n(\Omega_\omega)=0$. The assertion for odd orders follows by
pairing $\lambda$ with $-\lambda$ in the lattice sum. Projection
independence follows from Theorem~\ref{thm:intrinsic-class}.
\end{proof}

\section{Bernoulli reduction and the Hasse invariant}
\label{sec:hasse-reduction}

The reduction in order $p-1$ has a differential origin independent of
the degree-two hypothesis. We prove the local identity first and then
specialize it to invariant derivations on elliptic curves. The
universal double cover is a global corollary. Throughout this section
$p\geq5$ is prime and the Bernoulli numbers are normalized by $B_2=1/6$.

\subsection{A general differential reduction formula}

Let $B$ be a $p$-torsion-free $\mathbb Z_{(p)}$-algebra, let $\partial$
be a $\mathbb Z_{(p)}$-derivation of $B$, and let $f\in B$ with
$\partial f\in B^\times$. In $B[1/p]((t))$, define
\begin{equation}\label{eq:differential-Aharonov}
\frac{\partial f}{\displaystyle\sum_{j\geq1}
 (\partial^j f)t^j/j!}
=\frac1t-\sum_{n\geq1}S_n^\partial[f]t^{n-1}.
\end{equation}
This is the usual Aharonov expansion when $\partial=d/dz$. A bar
will denote reduction modulo $p$.

\begin{prop}\label{prop:Bernoulli-reduction}
Under these hypotheses, the normalized expression
\[
\mathcal S_{p-1}^\partial[f]
:=-\frac{(p-1)!}{B_{p-1}}S_{p-1}^\partial[f]
\]
belongs to $B$, and
\begin{equation}\label{eq:universal-Bernoulli-reduction}
\overline{\mathcal S_{p-1}^\partial[f]}
=\frac{\bar\partial^{\,p}\bar f}{\bar\partial\bar f}
\quad\text{in } B/pB.
\end{equation}
More precisely,
\begin{equation}\label{eq:leading-p-derivative}
S_{p-1}^\partial[f]
=\frac{\partial^p f}{p!\,\partial f}+R_p[f],
\qquad R_p[f]\in B.
\end{equation}
\end{prop}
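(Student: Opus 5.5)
The plan is to expand the left side of \eqref{eq:differential-Aharonov} as a geometric series and isolate the single term in which a factorial divisible by $p$ occurs. Put
\[
a_j=\frac{\partial^{j+1}f}{(j+1)!\,\partial f},\qquad A(t)=\sum_{j\geq1}a_jt^j .
\]
Then the denominator in \eqref{eq:differential-Aharonov} is $\partial f\cdot t\,(1+A(t))$, so the left side equals $t^{-1}(1+A(t))^{-1}$. Comparing with the right side gives
\[
(1+A(t))^{-1}=1-\sum_{n\geq1}S_n^\partial[f]\,t^n,
\]
that is, $S_n^\partial[f]=-[t^n](1+A)^{-1}$ for every $n\geq1$.

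Next I would examine the coefficient of $t^{p-1}$ in $(1+A)^{-1}=\sum_k(-A)^k$. Only $a_1,\dots,a_{p-1}$ can contribute. The variable $a_{p-1}$ enters only through the $k=1$ term, linearly and with coefficient $-1$; every other contribution is a product of at least two $a_j$ with $j\leq p-2$, and these products have integer coefficients. Hence
\[
S_{p-1}^\partial[f]=a_{p-1}+R_p[f],
\]
where $R_p[f]\in\mathbb Z[a_1,\dots,a_{p-2}]$. For $j\leq p-2$ the factorial $(j+1)!$ divides $(p-1)!$ and is therefore a unit in $\mathbb Z_{(p)}$. Since $\partial f\in B^\times$ and $B$ is stable under $\partial$, each such $a_j$ lies in $B$, so $R_p[f]\in B$. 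Because $a_{p-1}=\partial^pf/(p!\,\partial f)$, this is exactly \eqref{eq:leading-p-derivative}.

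The Bernoulli normalization is then handled with von Staudt--Clausen: since $(p-1)\mid(p-1)$, we have $pB_{p-1}\equiv-1\pmod p$, so $pB_{p-1}$ is a unit of $\mathbb Z_{(p)}$. Write
\[
-\frac{(p-1)!}{B_{p-1}}=-\frac{p\,(p-1)!}{pB_{p-1}},
\]
which lies in $p\mathbb Z_{(p)}$. Its product with $R_p[f]$ therefore lies in $pB$. Its product with the leading term is
\[
-\frac{(p-1)!}{B_{p-1}}\cdot\frac{\partial^pf}{p!\,\partial f}
=-\frac{1}{pB_{p-1}}\cdot\frac{\partial^pf}{\partial f},
\]
which lies in $B$ and reduces to $\bar\partial^{\,p}\bar f/\bar\partial\bar f$, because $-1/(pB_{p-1})\equiv1\pmod p$. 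Adding the two pieces gives $\mathcal S_{p-1}^\partial[f]\in B$ and \eqref{eq:universal-Bernoulli-reduction}. Here the derivation $\partial$ descends to $B/pB$, and $\bar\partial\bar f$ is a unit there.

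I do not expect a serious obstacle. The one point needing care is the bookkeeping claim that $a_{p-1}$ appears only linearly with coefficient exactly $-1$ in $[t^{p-1}](1+A)^{-1}$. Together with this, one must note that $p$-torsion-freeness makes computing in $B[1/p]$ and then concluding membership in $B$ legitimate.
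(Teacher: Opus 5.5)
Your proposal is correct and follows essentially the same route as the paper. Both use the normalization $a_j=\partial^{j+1}f/((j+1)!\,\partial f)$, observe that $a_{p-1}$ enters the $t^{p-1}$ coefficient of $(1+A)^{-1}$ only as $-a_{p-1}$ with every other term an integral polynomial in $a_1,\dots,a_{p-2}$, and then apply von Staudt--Clausen to get $-(p-1)!/B_{p-1}\in p\mathbb Z_{(p)}$ and $-1/(pB_{p-1})\equiv 1\pmod p$. Your remarks on $p$-torsion-freeness and on $\partial$ descending to $B/pB$ spell out points the paper leaves implicit.
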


\begin{proof}
Put
\[
b_j=\frac{\partial^{j+1}f}{(j+1)!\,\partial f}.
\]
The left side of \eqref{eq:differential-Aharonov} is
$t^{-1}(1+\sum_{j\geq1}b_jt^j)^{-1}$. In the coefficient of $t^{p-1}$
of the inverse series, the only term containing $b_{p-1}$ is
$-b_{p-1}$; every other term is an integral polynomial in
$b_1,\ldots,b_{p-2}$. For $j\leq p-2$, the denominator $(j+1)!$ is a
unit in $\mathbb Z_{(p)}$. Comparing this coefficient with
$-S_{p-1}^\partial[f]$ proves \eqref{eq:leading-p-derivative}.

The von Staudt--Clausen theorem gives
\begin{equation}\label{eq:Bernoulli-p-unit}
pB_{p-1}\in\mathbb Z_{(p)}^\times,
\qquad pB_{p-1}\equiv-1\pmod p
\end{equation}
\cite[pp.~233--237]{ireland-rosen}. Multiplying
\eqref{eq:leading-p-derivative} by $-(p-1)!/B_{p-1}$ gives
\[
\mathcal S_{p-1}^\partial[f]
=-\frac1{pB_{p-1}}\frac{\partial^p f}{\partial f}
 -\frac{(p-1)!}{B_{p-1}}R_p[f].
\]
Both terms belong to $B$. The coefficient of $R_p[f]$ lies in
$p\mathbb Z_{(p)}$, while $-1/(pB_{p-1})$ reduces to $1$. This proves
\eqref{eq:universal-Bernoulli-reduction}.
\end{proof}

This proof shows that the normalized expression is a universal
$\mathbb Z_{(p)}$-polynomial in the derivatives through order $p$, with
powers of $\partial f$ inverted. Thus its reduction depends only on
the reduced differential algebra and not on an integral lift. It is
this integral expression which is reduced; no Taylor expansion with
$1/p!$ as a coefficient is asserted in characteristic $p$.

\begin{remark}\label{rem:one-dimensional-reduction}
Map independence already holds for rational derivations on a curve.
Let $C$ be a smooth geometrically integral curve over a perfect field
$k$ of characteristic $p$, put $K=k(C)$, and let
$D\in\operatorname{Der}_k(K)$ be nonzero. Since
$\Omega^1_{K/k}$ has dimension one over $K$, so does
$\operatorname{Der}_k(K)$. The iterated Leibniz rule makes $D^p$ a
$k$-derivation, because all the intermediate binomial coefficients
vanish in characteristic $p$. Consequently there is a unique
$a_D\in K$ such that
\[
D^p=a_DD,\qquad \frac{D^pf}{Df}=a_D\quad\text{whenever }Df\neq0.
\]
The characteristic-$p$ evaluation of the universal polynomial in
Proposition~\ref{prop:Bernoulli-reduction} is therefore $a_D$ for
every such $f$. In general $a_D$ is a rational function on $C$.
For an elliptic invariant derivation, the coefficient is pulled back
from the base and is identified below with the Hasse invariant.
\end{remark}

\subsection{Independence of the rational function on an elliptic curve}

Let $E$ be an elliptic curve over an $\mathbb F_p$-algebra $R$, let
$\omega$ generate its invariant differentials, and let $\partial$ be
the invariant derivation dual to $\omega$. Write
\[
A(E)=a_\omega\,\omega^{\otimes(p-1)}
\]
for the Hasse invariant. Thus $a_\omega=A(E,\omega)$ is its coefficient
in this frame.

\begin{lem}[Restricted power of the invariant derivation]
\label{lem:restricted-Hasse}
One has
\begin{equation}\label{eq:restricted-Hasse}
\partial^p=a_\omega\partial.
\end{equation}
\end{lem}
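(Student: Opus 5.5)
Since $R$ has characteristic $p$, the plan is to show first that $\partial^p$ is again an invariant derivation, then identify the resulting scalar with the Hasse invariant. By the Leibniz rule, $\partial^p(gh)=\sum_{j}\binom pj\partial^j g\,\partial^{p-j}h$. The intermediate binomial coefficients vanish in characteristic $p$, so $\partial^p$ is a derivation, exactly as in Remark~\ref{rem:one-dimensional-reduction}. It commutes with translations because $\partial$ does. The module of invariant derivations is free of rank one, generated by $\partial$, since it is dual to the invariant differentials generated by $\omega$. Hence $\partial^p=c\,\partial$ for a unique $c\in R$. Under $\omega\mapsto u\omega$ with $u\in R^\times$, we have $\partial\mapsto u^{-1}\partial$ and $\partial^p\mapsto u^{-p}\partial^p$, so $c\mapsto u^{1-p}c$. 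Thus $c\,\omega^{\otimes(p-1)}$ is a frame-independent section of $\mathcal H_E^{\otimes(p-1)}$, of the same weight as $A(E)$. It remains to prove $c=a_\omega$.

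I would identify $c$ with the Hasse invariant by Cartier duality, which is the classical route cited in \cite[\S2.0]{katz}. The Hasse invariant can be defined as the map $F^*:H^1(E,\mathcal O_E)\to H^1(E^{(p)},\mathcal O)\simeq H^1(E,\mathcal O_E)^{\otimes p}$, or equivalently as the Cartier operator on $H^0(\Omega^1)$. On invariant derivations, the $p$-th power map on the restricted Lie algebra $\mathrm{Lie}(E)$ is dual, via Serre duality $\mathrm{Lie}(E)\simeq H^1(E,\mathcal O_E)$, to the Frobenius action on $H^1(E,\mathcal O_E)$. This is the standard fact that the $p$-map on $\mathrm{Lie}(E)$ equals the action of Frobenius on the tangent space identified with $H^1(\mathcal O)$. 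I would quote this identification as the definition-level comparison. The normalization $A(E)=a_\omega\omega^{\otimes(p-1)}$ then gives $c=a_\omega$.

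To keep the argument self-contained, I would also check the equality locally by a direct computation, which is the main obstacle. Work Zariski locally where $E$ has a Weierstrass model $y^2=x^3+a_4x+a_6$, with $p\ge5$, $\omega=dx/(2y)$ and $\partial=2y\,d/dx$. Let $t=-x/y$ be the formal parameter at the origin, and write $\omega=(1+\sum_{k\ge1}c_kt^k)\,dt$. The formal group law shows that the invariant derivation is $\partial=P(t)\,d/dt$ with $P=(1+\sum c_kt^k)^{-1}$. Then $\partial^p$ is $P^{p}\,(d/dt)^p$ plus first-order terms. Since $(d/dt)^p=0$ on $R[[t]]$, the task is to extract the coefficient of $d/dt$ in $\partial^p$ at $t=0$.

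By Jacobson's formula, or by expanding $\omega=d\log$ of the formal logarithm, $\ell(t)=\sum_k \frac{c_{k-1}}{k}t^k$, that coefficient equals $c_{p-1}$. The cleanest argument conjugates by the logarithm over a lift, where $\partial=d/d\ell$, and then reduces. The classical congruence states that $c_{p-1}$ equals the coefficient of $x^{p-1}$ in $(x^3+a_4x+a_6)^{(p-1)/2}$, which is the Deuring--Hasse description of $a_\omega$ in this frame. I would simply cite this for the identification, so the only genuinely computed step is $\partial^p=c_{p-1}\partial$. If that extraction proves messy, I would fall back entirely on the Lie-algebra/Frobenius duality citation.
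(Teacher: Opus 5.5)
Your main argument is the paper's: the Leibniz rule makes $\partial^p$ a derivation, translation invariance makes it $c\,\partial$, and $c=a_\omega$ follows from Katz's identification (\S2.0) of the $p$-th power map on $\mathrm{Lie}(E)\simeq H^1(E,\mathcal O_E)$ with Frobenius; your check that $c$ transforms with weight $1-p$ under a change of frame is a harmless addition. The formal-group computation $\partial^p=c_{p-1}\partial$ is correct but unnecessary, and it is not more self-contained, because it replaces one citation with two: the congruence between $c_{p-1}$ and the coefficient of $x^{p-1}$ in $(x^3+a_4x+a_6)^{(p-1)/2}$, and the Deuring--Hasse identification of that coefficient with $a_\omega$.
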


\begin{proof}
In characteristic $p$, the iterated Leibniz rule shows that $\partial^p$
is a derivation, since $\binom pj=0$ for $0<j<p$. It is again
translation-invariant and hence is a scalar multiple of $\partial$.
To identify the scalar, use the canonical self-duality of an elliptic
curve to identify $H^1(E,\mathcal O_E)$ with the invariant tangent
space. Under this identification, Frobenius on $H^1(E,\mathcal O_E)$
acts by taking the $p$th iterate of an invariant derivation. Its
coefficient in the basis dual to $\omega$ is the definition of
$a_\omega$. This is the standard description in
\cite[\S2.0, pp.~97--98]{katz}, and proves
\eqref{eq:restricted-Hasse} with the stated normalization.
\end{proof}

\begin{thm}[Map-independent Hasse reduction]\label{thm:general-Hasse}
Let $R$ be a $p$-torsion-free $\mathbb Z_{(p)}$-algebra, let $E/R$ be
an elliptic scheme, and choose a frame $\omega$ of its invariant
differentials. Let $U\subset E$ be an affine open and $f\in\mathcal O(U)$
with $\partial f\in\mathcal O(U)^\times$, where $\partial$ is dual to
$\omega$. Then
\[
\mathcal S_{p-1}^\omega[f]
:=-\frac{(p-1)!}{B_{p-1}}S_{p-1}^\partial[f]
\in\mathcal O(U),
\]
and, on $\bar U$,
\begin{equation}\label{eq:general-Hasse-reduction}
\overline{\mathcal S_{p-1}^\omega[f]}=a_{\bar\omega}.
\end{equation}
Consequently the reduction of
$\mathcal S_{p-1}^\omega[f]\omega^{\otimes(p-1)}$
is the restriction of the pullback of $A$. On every geometric special
fiber meeting $\bar U$, it extends uniquely to the entire fiber as
that regular section. The extension is independent of $f$ and of its
integral lift, and vanishes identically precisely when the fiber is
supersingular.
\end{thm}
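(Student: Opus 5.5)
The plan is to deduce the theorem from Proposition~\ref{prop:Bernoulli-reduction} and Lemma~\ref{lem:restricted-Hasse}, followed by a short extension argument on the special fibers.

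\emph{Integrality and the congruence.} Take $B=\mathcal O(U)$ and let $\partial$ be the restriction to $U$ of the invariant derivation dual to $\omega$. Since $E/R$ is smooth and $R$ is $p$-torsion-free, $B$ is $p$-torsion-free (flatness over $R$). The hypothesis $\partial f\in B^\times$ is exactly the one required there. Proposition~\ref{prop:Bernoulli-reduction} then gives $\mathcal S_{p-1}^\omega[f]\in\mathcal O(U)$ and
\[
\overline{\mathcal S_{p-1}^\omega[f]}=\bar\partial^{\,p}\bar f/\bar\partial\bar f .
\]
Reduction commutes with the formation of $\partial$, so $\bar\partial$ is the invariant derivation of $\bar E/(R/pR)$ dual to $\bar\omega$. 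Lemma~\ref{lem:restricted-Hasse} gives $\bar\partial^{\,p}=a_{\bar\omega}\bar\partial$, hence $\bar\partial^{\,p}\bar f=a_{\bar\omega}\,\bar\partial\bar f$. Dividing by the unit $\bar\partial\bar f$ yields \eqref{eq:general-Hasse-reduction}. Here $a_{\bar\omega}$ is a function on the base, pulled back to $\bar U$.

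\emph{Tensor statement and extension.} Multiply by $\bar\omega^{\otimes(p-1)}$. The result is $a_{\bar\omega}\bar\omega^{\otimes(p-1)}$, the restriction to $\bar U$ of the pullback of $A(\bar E)$, and this is a global regular section on $\bar E$. On a geometric fiber $E_s$ meeting $\bar U$, the set $\bar U\cap E_s$ is a dense open subset of the integral curve $E_s$. The sheaf $\omega^{\otimes(p-1)}$ is a line bundle, hence torsion-free, so any two regular extensions that agree on a dense open agree everywhere. This gives uniqueness, and existence is the pullback of $A$. The extension is determined by $A$ alone, so it is independent of $f$ and of its lift. The frame consistency follows from $a_{c\omega}=c^{1-p}a_\omega$, which matches the scaling \eqref{eq:scaling-Sn} of $S_{p-1}$. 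On the fiber the extension is the constant $a_{\bar\omega}(s)$ times a nowhere-vanishing section, so it vanishes identically exactly when the Hasse invariant of $E_s$ is zero, that is, when $E_s$ is supersingular.

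\emph{Main obstacle.} The delicate point is the compatibility of reduction with the derivation. One must check that the reduction of the integral universal polynomial, rather than a formal Taylor series with $1/p!$, is what gets evaluated; the remark following Proposition~\ref{prop:Bernoulli-reduction} settles this. One must also check that Lemma~\ref{lem:restricted-Hasse} applies over the base $R/pR$ and not only over a field. I would first restrict to geometric fibers, where Katz's description of Frobenius on $H^1(E,\mathcal O_E)$ is standard. For the statement over $\bar U$ itself I would rely on the fact that $\bar\partial^{\,p}-a_{\bar\omega}\bar\partial$ is an invariant derivation, hence determined by its value at the identity, which is computed by the lemma.
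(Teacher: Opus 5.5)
Your proposal is correct and follows the paper's proof essentially step for step. Both arguments use the same ingredients: $p$-torsion-freeness of $\mathcal O(U)$ from flatness over $R$, Proposition~\ref{prop:Bernoulli-reduction} combined with Lemma~\ref{lem:restricted-Hasse} for the congruence, density of the open set in the geometric fiber for uniqueness of the extension, and the frame-scaling check $a_{c\omega}=c^{1-p}a_\omega$ for gluing. One side remark: your fallback of checking the lemma only on geometric fibers would not suffice when $R/pR$ is non-reduced, but the fallback is unnecessary, since Lemma~\ref{lem:restricted-Hasse} is stated over an arbitrary $\mathbb F_p$-algebra and your main argument applies it directly.
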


\begin{proof}
Smoothness of $E/R$ and $p$-torsion-freeness of $R$ imply
$p$-torsion-freeness of $\mathcal O(U)$. The invariant derivation
preserves this ring. Apply Proposition~\ref{prop:Bernoulli-reduction}
and then Lemma~\ref{lem:restricted-Hasse} to obtain
\[
\overline{\mathcal S_{p-1}^\omega[f]}
=\frac{\bar\partial^{\,p}\bar f}{\bar\partial\bar f}
=a_{\bar\omega}.
\]
The denominator is a unit by hypothesis. Multiplying by
$\bar\omega^{\otimes(p-1)}$ gives the asserted equality of sections.
A nonempty open subset of a geometric elliptic curve is dense, so this
equality determines its regular extension uniquely. The vanishing
criterion is the usual one for the Hasse invariant
\cite[\S2.0]{katz}.

For a unit $c$ pulled back from the base, replacing $\omega$ by
$c\omega$ replaces $\partial$ by $c^{-1}\partial$ and
$S_{p-1}^\partial[f]$ by $c^{-(p-1)}S_{p-1}^\partial[f]$.
The tensor therefore glues under changes of frame. Its reduction is
independent of the map and its lift because it is the pullback of $A$.
\end{proof}

For a nonconstant separable rational function on a geometric special
fiber, the condition $\bar\partial\bar f\neq0$ holds on a dense open
set, after removing its poles and ramification points. The theorem
applies on such an open whenever an integral lift is chosen there;
equivalently, the universal polynomial of
Proposition~\ref{prop:Bernoulli-reduction} may be evaluated directly on
the characteristic-$p$ derivatives. The assertion does not include a
function with identically zero reduced derivative. In that case the
required inversion is unavailable. The characteristic-$p$ result thus
concerns the separable locus and does not impose a degree-two
hypothesis.

\subsection{The universal double cover}

Over the framed Weierstrass base
\[
R_p=\mathbb Z_{(p)}[E_4,E_6,\Delta^{-1}],
\qquad \Delta=\frac{E_4^3-E_6^2}{1728},
\]
let $\mathcal E$ be the smooth projective completion of
\begin{equation}\label{eq:universal-algebraic-curve}
y^2=4x^3-\frac{E_4}{12}x+\frac{E_6}{216},
\qquad \omega=\frac{dx}{y}.
\end{equation}
Write $e$ for the zero section. The scaling
\[
(E_4,E_6,x,y)\longmapsto(c^4E_4,c^6E_6,c^2x,c^3y)
\]
sends $\omega$ to $c^{-1}\omega$; its quotient is the moduli stack
$\mathcal M_{1,1}$ over $\mathbb Z_{(p)}$
\cite[\S1.1 and Appendix~A1]{katz}. The coordinate $x$ gives the
canonical quotient by inversion, up to a projective change of the
target coordinate. The invariant derivation satisfies
\begin{equation}\label{eq:algebraic-derivation}
\partial x=y,\qquad
\partial y=6x^2-\frac{E_4}{24},\qquad
\partial E_4=\partial E_6=0.
\end{equation}

Put $\mathcal U=\mathcal E\setminus\mathcal E[2]$. The nonzero
two-torsion points in the affine chart $\mathcal E\setminus e$ are
cut out by $y=0$. Therefore
\begin{equation}\label{eq:universal-open}
\mathcal U=D(y)=\operatorname{Spec}B_p,\qquad
B_p=R_p[x,y,y^{-1}]\big/
\left(y^2-4x^3+\frac{E_4}{12}x-\frac{E_6}{216}\right).
\end{equation}
The derivation preserves $B_p$, and $\partial x=y$ is a unit there.
Thus this entire open set satisfies the hypotheses of
Theorem~\ref{thm:general-Hasse} with $f=x$.

For even $k\geq4$, write
\begin{equation}\label{eq:normalized-S}
\mathcal S_k^\omega[x]= -\frac{k!}{B_k}S_k^\omega[x],
\qquad
\mathcal S_k=\mathcal S_k^\omega[x]\omega^{\otimes k}.
\end{equation}
For general $k$ these are rational expressions over $\mathbb Q$.
The second expression descends to a rational section on the unframed
universal curve, by \eqref{eq:scaling-Sn} and projective invariance.

\begin{cor}[Universal double-cover specialization]\label{cor:Hasse-reduction}
Let $\pi:\mathcal E\to\mathcal M_{1,1}$ be the universal elliptic
curve over $\mathbb Z_{(p)}$, and let
$\mathcal U=\mathcal E\setminus\mathcal E[2]$.
The section $\mathcal S_{p-1}$ is regular on $\mathcal U$ and takes
values in $\pi^*\omega_{\mathrm{Hdg}}^{\otimes(p-1)}$. Its reduction
satisfies
\begin{equation}\label{eq:Hasse-reduction}
\left.\overline{\mathcal S}_{p-1}\right|_{\bar{\mathcal U}}
=\left.\pi^*A\right|_{\bar{\mathcal U}},
\end{equation}
and extends uniquely to all of $\bar{\mathcal E}$ as $\pi^*A$.
The extension vanishes identically on a geometric fiber precisely
when that fiber is supersingular.
\end{cor}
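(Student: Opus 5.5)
The plan is to deduce the corollary from Theorem~\ref{thm:general-Hasse}, applied on the framed Weierstrass base and then descended to $\mathcal M_{1,1}$.

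\emph{Step 1: regularity on $\mathcal U$ over the framed base.} Take $R=R_p$, which is $p$-torsion-free since it is a localization of a polynomial ring over $\mathbb Z_{(p)}$. Take $U=\mathcal U=\operatorname{Spec}B_p$ as in \eqref{eq:universal-open}, with $f=x$. Because $\partial x=y$ is a unit in $B_p$, Theorem~\ref{thm:general-Hasse} applies. It gives $\mathcal S_{p-1}^\omega[x]\in B_p$, and hence $\mathcal S_{p-1}=\mathcal S_{p-1}^\omega[x]\omega^{\otimes(p-1)}$ is a regular section of the pullback of the $(p-1)$st Hodge power on $\mathcal U$. It also gives the reduction formula $\overline{\mathcal S_{p-1}^\omega[x]}=a_{\bar\omega}$ on $\bar{\mathcal U}$, which is \eqref{eq:Hasse-reduction} in the frame $\omega$.

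\emph{Step 2: descent to $\mathcal M_{1,1}$.} I would check invariance under the scaling $(E_4,E_6,x,y)\mapsto(c^4E_4,c^6E_6,c^2x,c^3y)$.
\begin{itemize}
\item The scaling sends $\omega\mapsto c^{-1}\omega$ and $x\mapsto c^2x$.
\item Projective invariance of $S_{p-1}$ absorbs the factor $c^2$ on $x$, since multiplication by $c^2$ is linear fractional. This uses only the defining expansion \eqref{eq:differential-Aharonov}, which is unchanged when $f$ is multiplied by a unit constant.
\item The frame change contributes $c^{p-1}$ by the scaling rule in the proof of Theorem~\ref{thm:general-Hasse}. This is cancelled by $\omega^{\otimes(p-1)}\mapsto c^{-(p-1)}\omega^{\otimes(p-1)}$.
\end{itemize}
So $\mathcal S_{p-1}$ is $\mathbb G_m$-invariant. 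The subscheme $\mathcal E[2]$ and the identity on $\bar{\mathcal U}$ are likewise compatible with the scaling. Both statements therefore descend to the universal curve over the stack quotient, identifying the reduction with $\pi^*A$ on $\bar{\mathcal U}$.

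\emph{Step 3: extension and the supersingular criterion.} Since $\pi^*A$ is a regular section on all of $\bar{\mathcal E}$ that restricts to $\overline{\mathcal S}_{p-1}$ on $\bar{\mathcal U}$, it is an extension. For uniqueness, I would argue that $\bar{\mathcal U}$ is fiberwise dense, being an elliptic curve minus four points, and that $\bar{\mathcal E}$ is reduced, indeed smooth over the reduced base $\mathbb F_p[E_4,E_6,\Delta^{-1}]$. Hence two sections of a line bundle that agree on $\bar{\mathcal U}$ agree everywhere. The vanishing statement then follows from the last clause of Theorem~\ref{thm:general-Hasse}: $\pi^*A$ vanishes identically on a geometric fiber exactly when $A$ vanishes at that point, which means the fiber is supersingular.

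I expect the main obstacle to be Step 2. One must be precise that regularity and the line bundle $\pi^*\omega_{\mathrm{Hdg}}^{\otimes(p-1)}$ are meant on the stack, so that they are checked on the smooth cover $\operatorname{Spec}R_p\to\mathcal M_{1,1}$ together with equivariance. I would handle this by stating the weight computation explicitly and citing \cite[\S1.1]{katz} for the presentation of $\mathcal M_{1,1}$ as this quotient. Over $\mathbb Z_{(p)}$ with $p\ge5$, the framed Weierstrass presentation covers all of $\mathcal M_{1,1}$.
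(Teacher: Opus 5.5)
Your proposal is correct and follows essentially the same route as the paper. Both apply Theorem~\ref{thm:general-Hasse} to $B_p$ and $f=x$ on the framed Weierstrass family, then descend via the frame scaling. Both get uniqueness of the extension from the density of the complement of $\bar{\mathcal E}[2]$ on the smooth framed family: you phrase it as fiberwise density plus reducedness, the paper as schematic density. Both then invoke the Hasse criterion for the vanishing statement.
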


\begin{proof}
On the framed family, apply Theorem~\ref{thm:general-Hasse} to
$B_p$ and $x$, using \eqref{eq:universal-open}. It gives the
integrality of $\mathcal S_{p-1}^\omega[x]$ and its reduction to
$A(E,\bar\omega)$. Multiplication by $\omega^{\otimes(p-1)}$ and
descent under frame changes give \eqref{eq:Hasse-reduction}.
The pullback $\pi^*A$ is regular on the whole special fiber.
The complement of the finite \'etale two-torsion subgroup is
schematically dense in that fiber: this can be checked on the smooth
framed family, where no irreducible component is contained in the
finite relative divisor. Thus a regular extension is unique.
The vanishing assertion is the Hasse criterion for supersingularity.
\end{proof}

The elliptic formula gives an additional explicit description of this
section. In the complex uniformization, put
\begin{equation}\label{eq:P-D}
x(u,\tau)=(2\pi i)^{-2}\wp(u;L_\tau),\qquad
\partial=(2\pi i)^{-1}\frac{d}{du}.
\end{equation}
Then $y=\partial x$, $\omega=2\pi i\,du$, and for even $k\geq4$,
\begin{equation}\label{eq:G-Bernoulli}
G_k(L_\tau)=-\frac{B_k}{k!}(2\pi i)^kE_k(\tau).
\end{equation}
Substitution in Theorem~\ref{thm:closed-formula} yields the algebraic
identity over $\mathbb Q$
\begin{equation}\label{eq:normalized-S-formula}
\mathcal S_k^\omega[x]
=E_k+\frac{k}{B_k}[2]^*(\partial^{k-2}x).
\end{equation}
The powers of $2\pi i$ only compare the analytic and algebraic frames.
In particular, $S_k^\omega[x]=(2\pi i)^{-k}S_k[\wp_\tau]$.

Equation~\eqref{eq:algebraic-derivation} shows that
\begin{equation}\label{eq:algebraic-integrality}
\partial^r x\in R_p[x,y]/
\left(y^2-4x^3+\frac{E_4}{12}x-\frac{E_6}{216}\right),
\qquad r\geq0.
\end{equation}
Since $[2]^{-1}(e)=\mathcal E[2]$, the map $[2]$ sends
$\mathcal U$ into $\mathcal E\setminus e$. Hence every
$[2]^*(\partial^r x)$ is regular on $\mathcal U$.
For $k=p-1$, the $q$-expansion principle gives $E_{p-1}\in R_p$,
and \eqref{eq:Bernoulli-p-unit} gives
$(p-1)/B_{p-1}\in p\mathbb Z_{(p)}$. Consequently
\[
\mathcal S_{p-1}^\omega[x]-E_{p-1}\in pB_p.
\]
Together with the classical congruence $\overline E_{p-1}=A$
\cite[\S2.1]{katz}, this is a second proof of
\eqref{eq:Hasse-reduction}. The existence and integrality of the
section have already followed from the general differential theorem;
\eqref{eq:normalized-S-formula} separates its Eisenstein term from
the correction divisible by $p$.

The mixed-characteristic section still has poles along two-torsion.
It is its reduction on $\bar{\mathcal U}$ that extends regularly
across this locus. The same distinction applies to the pole and
ramification loci excluded in Theorem~\ref{thm:general-Hasse}.

\section{Applications to Eisenstein exactness divisors}\label{sec:applications}

The exactness criterion identifies a classical Eisenstein divisor
through higher-Schwarzian differentials. We now apply the classical
zero theorems and supersingular congruence to describe this divisor
and its reduction.

\subsection{The characteristic-zero divisor}

For $\Omega=\omega_1(\mathbb Z+\tau\mathbb Z)$ and even $k\geq4$,
\begin{equation}\label{eq:G-E}
G_k(\Omega)=2\zeta(k)\omega_1^{-k}E_k(\tau).
\end{equation}
Thus Corollary~\ref{cor:exactness-criterion} says that the order-$k$
differential of any elliptic double cover is exact exactly when
$E_k(\tau)=0$. Write uniquely
\[
k=12r+4\alpha+6\beta,\qquad r\geq0,\quad
\alpha\in\{0,1,2\},\quad\beta\in\{0,1\}.
\]
The structure of the ring of level-one modular forms gives a unique
monic polynomial $F_k(X)\in\mathbb Q[X]$ of degree $r$ such that
\begin{equation}\label{eq:divisor-polynomial}
E_k=\Delta^r E_4^\alpha E_6^\beta F_k(j),
\qquad
\Delta=\frac{E_4^3-E_6^2}{1728},\quad j=\frac{E_4^3}{\Delta}.
\end{equation}
These are the classical Eisenstein divisor polynomials
\cite[\S1]{kz-ss}. Monicity follows from the constant term $1$ of $E_k$
and the leading terms $\Delta=q+O(q^2)$ and $j=q^{-1}+O(1)$.

\begin{prop}[Classical Eisenstein consequences]\label{prop:exactness-divisor}
Away from $j=0,1728$, the order-$k$ higher-Schwarzian differential is
exact if and only if $F_k(j)=0$. The roots of $F_k$ are simple and
belong to $(0,1728)$. Representatives of these nonelliptic exactness
classes lie on the arc
\[
\{e^{i\theta}:\pi/2<\theta<2\pi/3\}.
\]
The vanishing orders at $\rho=e^{2\pi i/3}$ and $i$, measured in a
local uniformizing parameter on the moduli stack, are $\alpha$ and
$\beta$, respectively.
\end{prop}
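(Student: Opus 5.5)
Our plan is to reduce everything to classical facts about zeros of $E_k$ via Corollary~\ref{cor:exactness-criterion} and \eqref{eq:G-E}. First, by Corollary~\ref{cor:exactness-criterion}, exactness of the order-$k$ differential is equivalent to $G_k(\Omega_\omega)=0$. By \eqref{eq:G-E} and $\zeta(k)\neq0$, this is equivalent to $E_k(\tau)=0$ for any $\tau$ representing $E$. Next we substitute the factorization \eqref{eq:divisor-polynomial}. On $\mathbb H$ the function $\Delta$ never vanishes. $E_4$ vanishes exactly on the orbit of $\rho$, i.e.\ where $j=0$, and $E_6$ vanishes exactly on the orbit of $i$, i.e.\ where $j=1728$; both zeros are simple in $\tau$, as follows from the valence formula. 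Hence away from $j=0,1728$ we get $E_k(\tau)=0$ if and only if $F_k(j(\tau))=0$. This gives the first assertion.

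For the location of the roots we invoke Rankin--Swinnerton-Dyer \cite{rankin-swinnerton-dyer}. All zeros of $E_k$ in the standard fundamental domain lie on the arc $|\tau|=1$, $\pi/2\le\theta\le2\pi/3$. The valence formula then shows that $E_k$ has exactly $r$ zeros on the open arc $\pi/2<\theta<2\pi/3$ besides those forced at $i$ and $\rho$, and that these are simple. The map $j$ is real on this arc and is a strictly monotone bijection onto $(0,1728)$. Therefore the $r$ roots of $F_k$ are distinct and real in $(0,1728)$, and $F_k$ has exactly $r$ of them counted with multiplicity, so they are simple. We must also exclude $F_k(0)=0$ and $F_k(1728)=0$. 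The valence formula does this: the orders of $E_k$ at $\rho$ and $i$ are exactly $\alpha$ and $\beta$, because $k/12=r+\alpha/3+\beta/2$ is already saturated by the $r$ arc zeros. Accordingly, we will cite \cite{rankin-swinnerton-dyer} for the arc statement and \cite{kohnen-trans} or \cite{kz-ss} for the simplicity.

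For the stacky vanishing orders, note that the local uniformizer on $\mathcal M_{1,1}$ at $\rho$ (respectively $i$) is $\tau-\rho$ (respectively $\tau-i$), taken with its automorphism group rather than the coarse parameter $j$. Since $j-j(\rho)$ vanishes to order $3$ and $j-1728$ to order $2$ there, and since $F_k(0)F_k(1728)\neq0$, the order of $E_k$ in $\tau$ is exactly the exponent of $E_4$ or $E_6$ in the factorization, namely $\alpha$ or $\beta$. The main obstacle is this last step: we must state carefully that measuring in the stack parameter, and not in $j$, gives $\alpha$ and $\beta$ rather than $\alpha/3$ and $\beta/2$. It also needs the nonvanishing $F_k(0),F_k(1728)\neq0$, which we take from the valence-formula count above.
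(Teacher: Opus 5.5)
Your outline matches the paper's proof: exactness $\iff G_k(\Omega_\omega)=0\iff E_k(\tau)=0$ by \eqref{eq:G-E}, then the factorization \eqref{eq:divisor-polynomial} with $\Delta E_4E_6\neq0$ off the elliptic orbits, then Rankin--Swinnerton-Dyer and the monotonicity of $j$ on the arc. However, one inference does not hold as written.

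From the bare location statement (every zero in the standard fundamental domain lies on the closed arc), the valence formula does not give exactly $r$ simple zeros on the open arc. For $k=12$ it is equally consistent with a triple zero at $\rho$ and no zero on the open arc. For $k=24$ it is consistent with a single double zero on the open arc. Your later exclusion of $F_k(0)=0$ and $F_k(1728)=0$, ``because $k/12$ is already saturated by the $r$ arc zeros'', presupposes exactly this unproved count. So simplicity, the exclusion of the endpoints of $[0,1728]$ as roots, and the endpoint orders $\alpha,\beta$ are all left unsupported. Deferring simplicity to Kohnen does not help, since his theorem concerns transcendence and takes the Rankin--Swinnerton-Dyer picture as input.

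The missing input is the actual content of the Rankin--Swinnerton-Dyer argument, not just its location conclusion. The function $e^{ik\theta/2}E_k(e^{i\theta})$ is real on the arc, they compare it with $2\cos(k\theta/2)$, and counting sign changes yields at least $r$ \emph{distinct} zeros on the open arc $\pi/2<\theta<2\pi/3$. With that in hand your argument closes, and more simply than through the valence formula:
\begin{itemize}
\item $j$ is injective on the open arc and $\Delta E_4E_6$ does not vanish there, so these zeros give $r$ distinct roots of $F_k$ in $(0,1728)$.
\item Since $\deg F_k=r$, these are all the roots and each is simple. In particular $F_k(0)F_k(1728)\neq0$.
\item Your factorization argument then gives the stack orders $\alpha$ and $\beta$ at once. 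That step, which you flag as the main obstacle, is routine once $F_k(0)F_k(1728)\neq0$ is known.
\item Every nonelliptic exactness class has $j$-invariant equal to one of these roots, so it is represented on the open arc.
\end{itemize}
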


\begin{proof}
Equation~\eqref{eq:G-E} gives the exactness criterion. Away from the
elliptic points, neither $E_4$ nor $E_6$ vanishes, while $\Delta$ has
no zero in the upper half-plane, so \eqref{eq:divisor-polynomial}
gives the polynomial criterion. The zero theorem of Rankin and
Swinnerton-Dyer \cite{rankin-swinnerton-dyer} places the remaining
zeros on the stated open arc and shows that they are simple, with
exactly the forced orders $\alpha$ and $\beta$ at its endpoints.
On the open arc $j$ is real and monotone with image $(0,1728)$.
This proves the assertions about $F_k$, including that it has no
root at either elliptic value.
\end{proof}

To specify the divisor convention, let $\nu:\mathcal M_{1,1,\mathbb C}
\to\mathbb A^1_j$ be the coarse moduli map. The exactness divisor is
the Cartier zero divisor of the section $E_k$ of the $k$th Hodge
power on this smooth moduli stack. If $\mathcal Z_{j_0}$ denotes the
reduced point substack with invariant $j_0$, it is
\[
\operatorname{div}(E_k)
=\alpha\mathcal Z_0+\beta\mathcal Z_{1728}
 +\sum_{F_k(j_0)=0}\mathcal Z_{j_0}.
\]
The local functions $j$ and $j-1728$ have orders $3$ at $\rho$ and
$2$ at $i$. Thus the rational divisor on the coarse $j$-line whose
pullback is this Cartier divisor is
\begin{equation}\label{eq:coarse-exactness-divisor}
\frac{\alpha}{3}[0]+\frac{\beta}{2}[1728]
 +\sum_{F_k(j_0)=0}[j_0].
\end{equation}
This pullback convention distinguishes the stack multiplicities from
the fractional elliptic contributions on the coarse curve.

Nozaki's theorem \cite{nozaki} gives interlacing of the nonelliptic
exactness classes in orders $k$ and $k+12$, or equivalently of the
roots of $F_k$ and $F_{k+12}$ in $(0,1728)$. Kohnen's theorem
\cite{kohnen-trans} shows that every nonelliptic zero of $E_k$ in the
standard fundamental domain is transcendental. Hence the corresponding
exactness parameters are transcendental and in particular are not CM.
These statements are applications of the classical Eisenstein zero
theorems through \eqref{eq:G-E}.

\subsection{The classical supersingular congruence}

For $p\geq5$ prime, put
\[
\delta_p=\begin{cases}1,&p\equiv2\pmod3,\\0,&p\equiv1\pmod3,\end{cases}
\qquad
\epsilon_p=\begin{cases}1,&p\equiv3\pmod4,\\0,&p\equiv1\pmod4,\end{cases}
\]
and define
\[
\operatorname{SS}_p(X)=
\prod_{\substack{j_0\in\overline{\mathbb F}_p\\
                 j_0\ \mathrm{supersingular}}}(X-j_0),
\]
with each supersingular invariant included once. The values $0$ and
$1728$ are supersingular exactly when $\delta_p=1$ and $\epsilon_p=1$,
respectively \cite[\S2]{kz-ss}.

\begin{cor}[Classical supersingular divisor congruence]
\label{cor:supersingular-polynomial}
The polynomial $F_{p-1}$ belongs to $\mathbb Z_{(p)}[X]$, and
\begin{equation}\label{eq:supersingular-polynomial}
\operatorname{SS}_p(X)
=X^{\delta_p}(X-1728)^{\epsilon_p}\overline F_{p-1}(X)
\quad\text{in }\mathbb F_p[X].
\end{equation}
In particular, $\overline F_{p-1}$ has simple roots and accounts
exactly for the nonelliptic supersingular values.
\end{cor}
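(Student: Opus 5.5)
The plan is to combine three inputs: the structural factorization \eqref{eq:divisor-polynomial} with $k=p-1$, the congruence $\overline E_{p-1}=A$ from \cite[\S2.1]{katz}, and Deligne's description of the supersingular locus as the simple zero divisor of the Hasse invariant. This is the classical Eisenstein case of \cite[Theorem~1]{kz-ss}. Corollary~\ref{cor:Hasse-reduction} identifies the reduction of the normalized order-$(p-1)$ invariant with $\pi^*A$. So the statement concerns the zero locus of $\overline E_{p-1}$ on the $j$-line.

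\emph{Step 1: integrality of $F_{p-1}$.} Write $p-1=12r+4\alpha+6\beta$. For the monic polynomial $F_{p-1}$ we would argue by $q$-expansions. The forms $E_4$, $E_6$, $\Delta$ have $q$-expansions in $\mathbb Z[[q]]$ with constant terms $1,1$ and leading term $q$. Also $j=q^{-1}+\mathbb Z[[q]]$. Since $p\geq5$, von Staudt--Clausen shows $E_{p-1}$ has $p$-integral coefficients: $(p-1)/B_{p-1}\in p\mathbb Z_{(p)}$, so its non-constant coefficients are in fact divisible by $p$. Solve
\[
E_{p-1}/(\Delta^rE_4^\alpha E_6^\beta)=F_{p-1}(j)
\]
coefficient by coefficient in $q^{-r},\dots,q^0$. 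The division by $\Delta^rE_4^\alpha E_6^\beta$ involves only units of $\mathbb Z[[q]]$ up to the factor $q^r$. The triangular system in powers of $j$ has unit leading coefficients. Hence all coefficients of $F_{p-1}$ lie in $\mathbb Z_{(p)}$.

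\emph{Step 2: reduction and matching roots.} Reduce \eqref{eq:divisor-polynomial} modulo $p$ to get $A=\overline E_{p-1}=\Delta^rE_4^\alpha E_6^\beta\overline F_{p-1}(j)$ as a modular form mod $p$. On a fiber with $j_0\neq0,1728$, the forms $E_4$, $E_6$, $\Delta$ are nonvanishing. Hence $A$ vanishes exactly when $\overline F_{p-1}(j_0)=0$. This shows that the roots of $\overline F_{p-1}$ other than $0,1728$ are precisely the nonelliptic supersingular invariants.

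At $j=0$, the form $E_4$ vanishes and $E_6$ does not. So $j=0$ is supersingular iff $\alpha\ne0$ or $\overline F_{p-1}(0)=0$. A congruence check gives $\alpha=1$ iff $p\equiv2\pmod3$, which is $\delta_p$; the other residue $\alpha=2$ cannot occur, since $p-1\equiv 4\alpha+6\beta \pmod{12}$ with $p-1$ even. Similarly $\beta=1$ iff $p\equiv3\pmod4$, which is $\epsilon_p$.

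\emph{Step 3: multiplicities (the main obstacle).} It remains to show that $\overline F_{p-1}$ has simple roots and no root at $0$ or $1728$. I would use that the Hasse invariant has simple zeros on the moduli stack: Igusa's theorem, as in \cite[\S2]{kz-ss} or Katz. Locally on the stack, $A$ vanishes to order $1$ at each supersingular point. At $j_0\neq0,1728$, the function $j-j_0$ is a uniformizer, so $\overline F_{p-1}$ has a simple root there.

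At $\rho$, the uniformizer $t$ satisfies $j\sim t^3$ and $E_4\sim t$. Then order $1=\alpha+3\,\mathrm{ord}_0\overline F_{p-1}$, which forces $\alpha=1$ and $\overline F_{p-1}(0)\neq0$. At $i$, we have $j-1728\sim t^2$ and $E_6\sim t$, which gives the same conclusion with $\beta$ and $1728$. Together with degree considerations, this yields \eqref{eq:supersingular-polynomial} and the final sentence of the corollary.

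The delicate point is citing the simplicity of zeros of $A$ at elliptic points in stack coordinates consistently with the divisor convention \eqref{eq:coarse-exactness-divisor}. Failing a clean citation, I would fall back on \cite[Theorem~1]{kz-ss} directly.
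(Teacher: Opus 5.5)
Your argument is correct, but it takes a different route from the paper. The paper's proof is a citation. It takes both the $p$-integrality of $F_{p-1}$ and the polynomial identity up to a constant from \cite[Theorem~1]{kz-ss}, and fixes the constant by monicity. It then gets simple roots of $\overline F_{p-1}$ as a consequence of the squarefreeness of $\operatorname{SS}_p$, which holds by definition.

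You instead reconstruct Deligne's argument in three steps:
\begin{enumerate}
\item integrality from $q$-expansions and von Staudt--Clausen;
\item equality of zero sets from $\overline E_{p-1}=A$;
\item multiplicities from Igusa's theorem that $A$ has simple zeros, together with the local orders of $j$, $E_4$, $E_6$ at the elliptic points.
\end{enumerate}
The logic is reversed. For the paper, simplicity is an output of the identity. For you, simplicity of the zeros of $A$ is an input, so you use a stronger geometric theorem that the paper leaves packaged inside the citation.

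In return, your route ties the corollary directly to the congruence $\overline E_{p-1}=A$ already used after Corollary~\ref{cor:Hasse-reduction}. It also derives, rather than assumes, that $0$ and $1728$ are supersingular exactly when $\delta_p=1$ and $\epsilon_p=1$. The relation $\operatorname{ord}_\rho A=\alpha+3\operatorname{ord}_0\overline F_{p-1}\in\{0,1\}$ forces $\overline F_{p-1}(0)\neq0$ and $\operatorname{ord}_\rho A=\alpha$, in both the supersingular and the ordinary case. The same holds at $i$, with $\operatorname{ord}_i A=\beta+2\operatorname{ord}_{1728}\overline F_{p-1}$.

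The point you flag as delicate is harmless, and you do not need the fallback citation. Igusa's theorem can be read on the moduli scheme with an auxiliary level $N\geq3$ prime to $p$. That scheme is \'etale over the stack, so orders computed there are stack orders. Since $p\geq5$, the automorphism groups $\mu_6$ and $\mu_4$ act tamely, so $\operatorname{ord}_\rho j=3$ and $\operatorname{ord}_i(j-1728)=2$. Then $E_4^3=j\Delta$ and $E_6^2=(j-1728)\Delta$ give $\operatorname{ord}_\rho E_4=\operatorname{ord}_i E_6=1$.

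Two small corrections:
\begin{enumerate}
\item Evenness of $p-1$ does not exclude $\alpha=2$, since the residues $2$ and $8$ modulo $12$ are even. The actual reason is that $\alpha\equiv p-1\pmod 3$ and $3\nmid p$.
\item No degree count is needed at the end. Two monic squarefree polynomials with the same roots in $\overline{\mathbb F}_p$ are equal.
\end{enumerate}
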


\begin{proof}
This is the Eisenstein-series case of \cite[Theorem~1]{kz-ss}, in the
normalization
\[
E_{p-1}=\Delta^r E_4^{\delta_p}E_6^{\epsilon_p}F_{p-1}(j).
\]
That theorem gives $p$-integrality and the complete polynomial
congruence. Since both polynomials are monic, the constant factor is
$1$. Squarefreeness of $\operatorname{SS}_p$ gives the last assertion.
\end{proof}

The congruence is the classical result of Deligne; see also
\cite{gomez-lakein-larsen}. Its role here is to describe the reduction
of the Eisenstein exactness divisor. The equality of sections in
Corollary~\ref{cor:Hasse-reduction} is compatible with this polynomial
identity, but is not a substitute for its multiplicity statement.

\subsection{The weight-twelve example}

The first nonelliptic exactness class occurs in weight twelve. Since
\[
E_{12}=E_4^3-\frac{432000}{691}\Delta,
\qquad F_{12}(X)=X-\frac{432000}{691},
\]
its invariant is $j(\tau)=432000/691$, and its representative $\tau$
on the open unit-circle arc is transcendental by Kohnen's theorem.
In characteristic $13$ the same divisor polynomial satisfies
\[
F_{12}(X)\equiv X+8\pmod{13}.
\]
Here $\delta_{13}=\epsilon_{13}=0$, so this is the supersingular
polynomial itself. It exhibits the characteristic-zero exactness
class and its supersingular reduction in the same normalization.

\raggedbottom

\end{document}